\documentclass[11pt]{article}
\usepackage{amsfonts}
\usepackage{graphicx}
\usepackage{tabularx}
\usepackage{array}
\usepackage[usenames,dvipsnames]{color}
\usepackage{comment}
\usepackage{amsmath}
\usepackage{amsthm}
\usepackage{amssymb}
\usepackage{fullpage}
\usepackage[dvipsnames]{xcolor}
\usepackage{tikz}
\usepackage{listings}

\DeclareMathOperator{\sv}{\textsc{Save}}

\DeclareMathOperator{\ww}{wd}

\newcommand{\ds}{\textsf{DelSave}}

\newtheorem{theorem}{Theorem}[section]

\newtheorem{question}[theorem]{Question}
\newtheorem{claim}[theorem]{Claim}
\newtheorem{lemma}[theorem]{Lemma}
\newtheorem{corollary}[theorem]{Corollary}
\theoremstyle{definition}
\newtheorem{definition}[theorem]{Definition}

\newtheorem*{theorem13}{Theorem \ref{thm:AT}}
\newtheorem*{theorem14}{Theorem \ref{thm:chromatic_intro}}

\def\epsilon{\varepsilon}

\title{Paintability of $r$-chromatic graphs}
\author{Peter Bradshaw\thanks{Department of Mathematics, University of Illinois Urbana-Champaign. Peter Bradshaw received funding from NSF RTG grant DMS-1937241.} \
and Jinghan A Zeng\thanks{Department of Mathematics, University of Illinois Urbana-Champaign. Jinghan Zeng received funding from NSF grant DMS-1937241 through the ICLUE program of University of Illinois Urbana-Champaign.}}

\date{}
\begin{document}
\maketitle
\begin{abstract}
The online list coloring game is a two-player graph-coloring game
played on a graph $G$ as follows.
On each turn, a Lister reveals a new color $c$ at some subset $S \subseteq V(G)$ of uncolored vertices, and then a Painter chooses an independent subset of $S$ to which to assign $c$.
As the game is played, the revealed colors at each vertex $v \in V(G)$ form a color set $L(v)$, often called a list. 
The \emph{paintability} of $G$ measures the minimum value $k$ for which Painter has a strategy to complete a coloring of $G$ in such a way that $|L(v)| \leq k$ for each vertex $v \in V(G)$.
The paintability of a graph is an upper bound for its list chromatic number, or choosability. 

The online list coloring game is a special case of the \emph{DP-painting} game, which is defined similarly using the setting of DP-coloring. 
In the DP-painting game, the Lister reveals correspondence covers of a graph $G$ rather than colors, and the Painter chooses independent subsets of these covers.
The DP-painting game has a parameter known as \emph{DP-paintability} which is analogous to paintability.

In this paper, we consider upper bounds for the paintability and DP-paintability of a graph $G$ with large maximum degree $\Delta$ and chromatic number at most some fixed value $r$.
We prove that the paintability of $G$ is at most $\left(1 - \frac{1}{4r+1} \right ) \Delta + 2$ and that the DP-paintability of $G$ is at most $\Delta - \Omega( \sqrt{\Delta \log \Delta})$. We prove our first upper bound using Alon-Tarsi orientations, and we prove our second upper bound by considering the \emph{strict type-$3$ degeneracy} parameter recently introduced by Zhou, Zhu, and Zhu.
\end{abstract}
\section{Introduction}

\subsection{Background: Online list coloring and online DP-coloring}
Given a graph $G$, a \emph{list assignment} on $G$ is a function $L:V(G) \rightarrow 2^{\mathbb N}$ that assigns a color set $L(v)$ to each vertex $v \in V(G)$.
Given a function $f:V(G) \rightarrow \mathbb N$, a list assignment $L$ which satisfies $|L(v)| = f(v)$ for all $v \in V(G)$ is called an \emph{$f$-assignment}, and $L$ is called a \emph{$k$-assignment} if $|L(v)| = k$ for all $v \in V(G)$.
Given a list assignment $L$ on $G$, an \emph{$L$-coloring} of $G$ is a mapping $\phi:V(G) \rightarrow \mathbb N$ such that $\phi(u) \neq \phi(v)$ for each adjacent vertex pair $uv \in E(G)$ and such that $\phi(v) \in L(v)$ for each vertex $v \in V(G)$. The graph $G$ is \emph{$L$-colorable} if $G$ has an $L$-coloring, and  $G$ is \emph{$k$-choosable} if $G$ has an $L$-coloring for every $k$-assignment $L$.
The problem of determining whether a graph $G$ has an $L$-coloring for a given list assignment $L$, introduced independently by Vizing \cite{Vizing} and Erd\H{o}s, Rubin, and Taylor \cite{ERT}, is called the \emph{list coloring problem}.
The minimum value $k$ for which $G$ is $k$-choosable is called the \emph{choosability} of $G$ and is denoted by $ch(G)$. 

The list coloring problem has the following online variant, known as the  \emph{painting game}, introduced independently by Schauz \cite{SchauzP} and Zhu \cite{ZhuP}. Given a graph $G$ and a mapping $f:V(G) \rightarrow \mathbb N$, two players known as Lister and Painter play the \emph{$f$-painting game}
as follows. Each vertex $v \in V(G)$ begins with $f(v)$ tokens. On each turn $i$, Lister
removes at most one token from each uncolored vertex of $G$ and
denotes by $S_i$ the set of vertices from which a token has been removed on turn $i$. Then, Lister reveals the color $i$ at each vertex of $S_i$, and Painter colors an independent subset of $S_i$ with the color $i$. The game ends when either every vertex of $G$ is colored, or when at least one uncolored vertex of $G$ has zero tokens at the end of a turn. Painter wins the game if every vertex of $G$ has been colored at the end of the game; otherwise, Lister wins. We say $G$ is \emph{$f$-paintable} if Painter has a winning strategy in the $f$-painting game. We write $\chi_P(G)$ for the \emph{paintability} of $G$, which is the minimum integer $k$ such that $G$ is $f$-paintable for the constant function $f(v) = k$. 
Schauz \cite{SchauzP} and Zhu \cite{ZhuP} observed that for every graph $G$, $ch(G) \leq \chi_P(G)$.



Kim, Kostochka, Li, and Zhu \cite{KKLZ} introduced a generalized version of the painting game that serves as an online version of the \emph{DP-coloring} problem of Dvo\v{r}\'ak and Postle.
Given a graph $G$ and a mapping $f:V(G) \rightarrow \mathbb N$,
the \emph{DP-$f$-painting game} is played as follows.
Each vertex $v \in V(G)$ begins with $f(v)$ tokens, and each vertex of $G$ begins the game unmarked.
On each turn, Lister removes some number $g(v) \geq 0$ of tokens from each vertex $v \in V(G)$. Lister then generates a graph $H$ and a function $L:V(G) \rightarrow 2^{V(H)}$ which obeys the following rules:
\begin{itemize}
\item The sets $L(v)$ taken over $v \in V(G)$ form a partition of $V(H)$. 
\item For each $v \in V(G)$, $|L(v)|=g(v)$.
\item If $a,b \in L(v)$ for some $v \in V(G)$, then $a$ and $b$ are adjacent in $H$. 
\item Given two distinct vertices $u,v \in V(G)$, if $a \in L(u)$ and $b \in L(v)$ are adjacent in $H$, then $uv \in E(G)$.
\item If $u$ and $v$ are adjacent in $G$, then the edges in $H$ between $L(u)$ and $L(v)$ form a (possibly empty) matching. 
\end{itemize}

\begin{figure}
        \begin{center}
    \begin{tikzpicture}
[scale=1,auto=left,every node/.style={circle,fill=gray!30,minimum size = 6pt,inner sep=0pt}]

\node(z) at (0,-0.4) [draw=white,fill=white]{$2$};
\node(z) at (-1.4,1) [draw=white,fill=white]{$2$};
\node(z) at (0,2.4) [draw=white,fill=white]{$3$};
\node(z) at (1.4,1) [draw=white,fill=white]{$3$};

\node(v1) at (0,0) [draw = black] {};
\node(w1) at (1,1) [draw = black] {};
\node(x1) at (0,2) [draw = black] {};
\node(y1) at (-1,1) [draw = black] {};
\node(z) at (2,-0.5) [draw=white,fill=white]{};

\draw   (v1) -- (w1);
\draw   (x1) -- (w1);
\draw (y1) -- (x1);
\draw   (y1) -- (v1);

\end{tikzpicture}
    \begin{tikzpicture}
[scale=1,auto=left,every node/.style={circle,fill=gray!30,minimum size = 6pt,inner sep=0pt}]
\node(z) at (-1,0) [draw=white,fill=white] {};
\node(v1) at (0,0) [draw = black] {};
\node(v2) at (0,0.5) [draw = black] {};

\node(w1) at (1,1) [draw = black] {};
\node(w2) at (1,1.5) [draw = black] {};
\node(w3) at (1,2) [draw = black] {};

\node(x1) at (0,2) [draw = black] {};
\node(x2) at (0,2.5) [draw = black] {};
\node(x3) at (0,3) [draw = black] {};

\node(y1) at (-1,1) [draw = black] {};
\node(y2) at (-1,1.5) [draw = black] {};

\draw[draw=black] (-0.3,-0.3) rectangle ++(0.6,1.1);
\draw[draw=black] (-0.3,1.7) rectangle ++(0.6,1.6);
\draw[draw=black] (-1.3,0.7) rectangle ++(0.6,1.1);
\draw[draw=black] (0.7,0.7) rectangle ++(0.6,1.6);

\draw (w1) -- (w2);
\draw (w3) -- (w2);
  \path[-]          (w1)  edge   [bend right=30]   (w3);

\draw (x1) -- (x2);
\draw (x3) -- (x2);
  \path[-]          (x1)  edge   [bend left=30]   (x3);

\draw (w3) -- (x3);
\draw  (v1) -- (w1);
\draw   (x1) -- (w1);
\draw   (y1) -- (x1);
\draw   (y1) -- (v2);
\draw   (v2) -- (w2);
\draw   (x2) -- (w2);
\draw   (y2) -- (x2);
\draw   (y2) -- (v1);
\draw   (y2) -- (y1);
\draw   (v2) -- (v1);

\end{tikzpicture}
\end{center}
\caption{The figure on the left shows a graph $G$ along with a function $g:V(G) \rightarrow \mathbb N$. The figure on the right shows a $g$-cover of $G$.}
\label{fig:DP}
\end{figure}

The pair $(H,L)$ that Lister generates is called a \emph{$g$-cover} of $G$. An example of a $g$-cover is shown in Figure \ref{fig:DP}.
After Lister generates $H$ and $L$, Painter responds by selecting an independent set $I$ in $H$. For each $v \in V(G)$, if $L(v) \cap I$ is nonempty, then Painter marks the vertex $v$. The game ends when all vertices 
of $G$
run out of tokens. 
We require that Lister never lets the function $g$ be identically zero, so that at least one token is removed on each turn and the game ends in finite time.
Painter wins if all vertices are marked at the end of the game; otherwise, Lister wins. A graph is \emph{DP-$f$-paintable} if Painter has a winning strategy in the DP-$f$-painting game.
We write $\chi_{DPP}(G)$ for the \emph{DP-paintability} of $G$, which is the minimum integer $k$ such that $G$ is DP-$f$-paintable for the constant function $f(v) = k$.



\subsection{Background: Alon-Tarsi number}

Alon and Tarsi \cite{AT} introduced the following sufficient condition for showing that a graph is $f$-choosable for some function $f:V(G) \rightarrow \mathbb N$.
Given an oriented graph $\vec G$, a spanning
subgraph $\vec H \subseteq \vec G$ is
\emph{Eulerian} if $V(\vec H) = V(\vec G)$ and each vertex $v \in V(\vec H)$ satisfies $\deg^+_{\vec H}(v) = \deg^-_{\vec H}(v)$. Note that the edgeless subgraph is also Eulerian. We say that $\vec H$ is \emph{even} if $|E(\vec H)|$ is even; otherwise, $\vec H$ is \emph{odd}. We write $EE(\vec G)$ for the number of even Eulerian subgraphs $\vec H \subseteq \vec G$, and we write $EO(\vec G)$ for the number of odd Eulerian subgraphs of $\vec G$. 
Writing $G$ for the undirected graph that underlies $\vec G$, $\vec G$ is an \emph{Alon-Tarsi orientation} of $G$ if 
$EE(\vec G) \neq EO(\vec G)$. We show an example of an Alon-Tarsi orientation in Figure \ref{fig:AT}.

\begin{figure}
\begin{center}
\begin{tikzpicture}
[scale=1.2,auto=left,every node/.style={circle,fill=gray!30,minimum size = 6pt,inner sep=0pt}]
\node(z) at (-1.15,-0.25) [draw=white,fill=white] {$w$};
\node(z) at (0.9,0.5) [draw=white,fill=white] {$y$};
\node(z) at (0,-0.25) [draw=white,fill=white] {$z$};
\node(z) at (0,1.25) [draw=white,fill=white] {$v$};
\node(z) at (-1.15,1.25) [draw=white,fill=white] {$u$};
\node(z) at (-0.45,0.25) [draw=white,fill=white] {$x$};
\node(p1) at (-1,0) [draw = black] {};
\node(p2) at (-0.5,0.5) [draw = black] {};
\node(p3) at (0.6,0.5) [draw = black] {};
\node(p4) at (-1,1) [draw = black] {};
\node(r) at (0,0) [draw=black] {};
\node(s) at (0,1) [draw=black] {};

\draw[->] (p1) -- (p2);
\draw[->] (p2) -- (p4);
\draw[->] (p4) -- (p1);
\draw[->] (p2) -- (p3);
\draw[->] (p3) -- (r);
\draw[->] (p4) -- (s);
\draw[->] (s) -- (p3);
\draw[->] (r) -- (p1);

\foreach \from/\to in {p1/p2,p2/p4,p2/p3,r/p1,p4/p1,r/p3,s/p3,s/p4}
    \draw (\from) -- (\to);
\end{tikzpicture}
\caption{The figure shows an oriented graph $\vec G$ with an Alon-Tarsi orientation. $\vec G$ has a single odd Eulerian subgraph $\{uw,wx,xu\}$, so $EO(\vec G) = 1$. Furthermore, $\vec G$ has three even Eulerian subgraphs, namely the edgeless subgraph, $\{wx,xy,yz,zw\}$, and $\{uv,vy,yz,zw,wx,xu\}$, so $EE(\vec G) = 3$. As $EE(\vec G) \neq EO(\vec G)$, the orientation of $\vec G$ is an Alon-Tarsi orientation.}
\label{fig:AT}
\end{center}
\end{figure}

Given an undirected graph $G$ and a function $f:V(G) \rightarrow \mathbb N$, $G$ is \emph{$f$-Alon-Tarsi} if $G$ has an Alon-Tarsi orientation $\vec G$ satisfying $\deg_{\vec G}^+(v) < f(v)$ for each $v \in V(G)$.
Then, the
\emph{Alon-Tarsi number} of $G$, written $AT(G)$, 
is the minimum value $k$ such that $G$ is $f$-Alon Tarsi for the constant function $f(v) = k$. 
Alon and Tarsi \cite{AT} proved that $ch(G) \leq AT(G)$ for every graph $G$.
Schauz \cite{SchauzAT} later proved the stronger inequality $\chi_P(G) \leq AT(G)$.

\subsection{Background: Strict type-$3$ and strict type-$4$ degeneracy}
All of the parameters of a graph $G$ introduced in the previous subsections, namely $ch(G)$, $\chi_P(G)$, $\chi_{DPP}(G)$, and $AT(G)$, are bounded above by the \emph{strict degeneracy} of a graph, written $sd(G)$ and defined as the minimum integer $d$ for which $G$ has an acyclic orientation in which each vertex $v \in V(G)$ has out-degree $\deg^+(v) < d$. 
The strict degeneracy of a graph is also called the \emph{coloring number}, and it is equal to one plus the \emph{degeneracy} of a graph.
Recently, Zhou, Zhu, and Zhu \cite{ZZZ} introduced four additional types of strict degeneracy with the aim of finding sharper upper bounds for the parameters introduced in the previous sections. In this paper we consider their strict degeneracy parameters of types $3$ and $4$.

The \emph{strict type-$3$ degeneracy} and \emph{strict type-$4$ degeneracy} of a graph $G$
are
defined using the following $\ds$ operation. The operation is defined in a restricted form in \cite{BL} and appears in the following general form in \cite{HHZ} and \cite{ZZZ}:
\begin{definition}
    Let $G$ be a graph, and let $f:V(G) \rightarrow \mathbb N$ be a positive-valued function. For a vertex $u \in V(G)$ and a subset $W \subseteq N(u)$,
    the operation $\ds(G,f,u,W)$ outputs the graph $G' = G - u$ and the mapping $f':V(G') \rightarrow \mathbb Z$ defined by 
    \[f'(v) = \begin{cases}
        f(v) - 1 & \textrm{ if } v \in N(u) \setminus W \\
        f(v) & \textrm{ otherwise}.
    \end{cases}
    \]
\end{definition}
Application of the operation $\ds(G,f,u,W)$ is \emph{legal} if 
\begin{itemize}
    \item $f(u) > \sum_{w \in W} f(w)$, and
    \item $f'(v) \geq 1$ for all $v \in V(G')$.
\end{itemize}
We show an example of an application of the $\ds$ function in Figure \ref{fig:DS}.
We often say that when $\ds(G,f,u,W)$ is applied, $u$ \emph{saves} each vertex in the set $W$.
The operation $\ds(G,f,u,W)$ is \emph{restricted} if $|W| \leq 1$, in which case $u$ saves at most one vertex.

\begin{figure}
\begin{center}
\begin{tikzpicture}
[scale=1.2,auto=left,every node/.style={circle,fill=gray!30,minimum size = 6pt,inner sep=0pt}]

\clip (-2,-1) rectangle + (3,2.5);

\node(z) at (-0.5,-0.75) [draw=white,fill=white] {$G$};
\node(z) at (0.9,-2.12) [draw=white,fill=white] {};
\node(z) at (-1.15,-0.25) [draw=white,fill=white] {$w$};
\node(z) at (0.9,0.25) [draw=white,fill=white] {};
\node(z) at (0,1.25) [draw=white,fill=white] {$v$};
\node(z) at (-1.15,1.25) [draw=white,fill=white] {$u$};
\node(z) at (-0.45,0.25) [draw=white,fill=white] {$x$};
\node(p1) at (-1,0) [draw = black] {};
\node(p2) at (-0.5,0.5) [draw = black] {};
\node(p3) at (0,0.5) [draw = black] {};
\node(p4) at (-1,1) [draw = black] {};
\node(r) at (0,0) [draw=black] {};
\node(s) at (0,1) [draw=black] {};

\foreach \from/\to in {p1/p2,p2/p4,p2/p3,r/p1,p4/p1,r/p3,s/p3,s/p4}
    \draw (\from) -- (\to);
\end{tikzpicture}
\begin{tikzpicture}
[scale=1.2,auto=left,every node/.style={circle,fill=gray!30,minimum size = 6pt,inner sep=0pt}]

\clip (-2,-1) rectangle + (3,2.5);

\node(z) at (-0.5,-0.75) [draw=white,fill=white] {$f:V(G) \rightarrow \mathbb N$};
\node(z) at (0.9,-2.11) [draw=white,fill=white] {};
\node(z) at (-1.15,-0.25) [draw=white,fill=white] {$2$};
\node(z) at (0.9,0.25) [draw=white,fill=white] {};
\node(z) at (0.25,-0.25) [draw=white,fill=white] {$3$};
\node(z) at (0.25,0.5) [draw=white,fill=white] {$3$};
\node(z) at (0.25,1.25) [draw=white,fill=white] {$1$};
\node(z) at (-1.15,1.25) [draw=white,fill=white] {$4$};
\node(z) at (-0.45,0.25) [draw=white,fill=white] {$2$};
\node(p1) at (-1,0) [draw = black] {};
\node(p2) at (-0.5,0.5) [draw = black] {};
\node(p3) at (0,0.5) [draw = black] {};
\node(p4) at (-1,1) [draw = black] {};
\node(r) at (0,0) [draw=black] {};
\node(s) at (0,1) [draw=black] {};

\foreach \from/\to in {p1/p2,p2/p4,p2/p3,r/p1,p4/p1,r/p3,s/p3,s/p4}
    \draw (\from) -- (\to);
\end{tikzpicture}
\begin{tikzpicture}
[scale=1.2,auto=left,every node/.style={circle,fill=gray!30,minimum size = 6pt,inner sep=0pt}]

\clip (-2,-1) rectangle + (3,2.5);

\node(z) at (-1.15,-0.25) [draw=white,fill=white] {$1$};
\node(z) at (0.9,-2.12) [draw=white,fill=white] {};
\node(z) at (-0.5,-0.75) [draw=white,fill=white] {$\ds(G,f,u,\{v\}$)};
\node(z) at (-1.15,-0.25) [draw=white,fill=white] {$1$};
\node(z) at (0.25,1.25) [draw=white,fill=white] {$1$};
\node(z) at (-0.45,0.25) [draw=white,fill=white] {$1$};
\node(z) at (0.25,0.5) [draw=white,fill=white] {$3$};
\node(p1) at (-1,0) [draw = black,fill=black] {};
\node(p2) at (-0.5,0.5) [draw = black,fill=black] {};
\node(p3) at (0,0.5) [draw = black] {};
\node(z) at (0.25,-0.25) [draw=white,fill=white] {$3$};
\node(r) at (0,0) [draw=black] {};
\node(s) at (0,1) [draw=black] {};

\foreach \from/\to in {p1/p2,p2/p3,r/p1,r/p3,s/p3}
    \draw (\from) -- (\to);
\end{tikzpicture}
\begin{tikzpicture}
[scale=1.2,auto=left,every node/.style={circle,fill=gray!30,minimum size = 6pt,inner sep=0pt}]

\clip (-2.25,-1) rectangle + (3.5,2.5);

\node(z) at (-1.15,-0.25) [draw=white,fill=white] {$2$};
\node(z) at (-0.5,-0.75) [draw=white,fill=white] {$\ds(G,f,u,\{v,w\}$)};
\node(z) at (0.25,1.25) [draw=white,fill=white] {$1$};
\node(z) at (-0.45,0.25) [draw=white,fill=white] {$1$};
\node(p1) at (-1,0) [draw = black] {};
\node(p2) at (-0.5,0.5) [draw = black,fill=black] {};
\node(p3) at (0,0.5) [draw = black] {};
\node(z) at (0.25,-0.25) [draw=white,fill=white] {$3$};
\node(z) at (0.25,0.5) [draw=white,fill=white] {$3$};
\node(r) at (0,0) [draw=black] {};
\node(s) at (0,1) [draw=black] {};
\node(z) at (-1.15,-0.25) [draw=white,fill=white] {$2$};

\foreach \from/\to in {p1/p2,p2/p3,r/p1,r/p3,s/p3}
    \draw (\from) -- (\to);
\end{tikzpicture}
\caption{The first figure shows a graph $G$, and the second figure shows a function $f$ that assigns a positive integer to each vertex of $G$. The third figure shows the graph and mapping that result from $\ds(G,f,u,\{v\})$. The fourth figure shows the graph and mapping that result from $\ds(G,f,u,\{v,w\})$. In the third and fourth figures, each vertex whose corresponding value decreases as a result of the $\ds$ function is shaded.}
\label{fig:DS}
\end{center}
\end{figure}

Given a function $f:V(G) \rightarrow \mathbb N$,
$G$ is \emph{strict type-$3$ $f$-degenerate} ($ST^{(3)}$-$f$-degenerate) if whenever $G$ is initially equipped with the function $f$, all vertices of $G$ can be deleted through repeated applications of legal $\ds$ operations. Furthermore, $G$ is \emph{strict type-$4$ $f$-degenerate} ($ST^{(4)}$-$f$-degenerate) if whenever $G$ is initially equipped with the function $f$, all vertices of $G$ can be deleted through repeated applications of legal \emph{restricted} $\ds$ operations. 
For $i \in \{3,4\}$, the \emph{strict type-$i$ degeneracy} of $G$, written $sd^{(i)}(G)$, is the minimum integer $k$ such that $G$ is $ST^{(i)}$-$f$-degenerate for the constant function $f(v) = k$.
It is straightforward to show that 
$sd^{(3)}(G) \leq sd^{(4)}(G) \leq sd(G)$.

Bernshteyn and Lee \cite{BL} first investigated strict type-$4$ degeneracy
using the closely related notion of \emph{weak degeneracy}. They defined the weak degeneracy $\ww(G)$ of a graph $G$ as $\ww(G) = sd^{(4)}(G) - 1$, and they  showed that $\chi_{DPP}(G) \leq \ww(G)+ 1 = sd^{(4)}(G)$ for every graph $G$.

Zhou, Zhu, and Zhu \cite{ZZZ} characterized the strict type-$3$ degeneracy of a graph in terms of \emph{removal schemes}, which are defined as follows.
If $G$ is $ST^{(3)}$-$f$-degenerate for a function $f:V(G) \rightarrow \mathbb N$, then 
some sequence $\mathcal S$ of legal $\ds$ operations removes all vertices of $G$.
A sequence of legal $\ds$ operations on the graph $G$ with the initial function $f$ is called a \emph{(legal) removal scheme} on $(G,f)$. (As we only consider legal $\ds$ operations in this paper, we often omit the word ``legal.") We say that a removal scheme $\mathcal S$ is \emph{complete} if the operations of $\mathcal S$ remove all vertices of $G$.
Given a removal scheme $\mathcal S$ on $(G,f)$, we often define a function $\sv:V(G) \rightarrow 2^{V(G)}$ such that $\sv(u) = W$ for each vertex $u$ removed by $\mathcal S$, where $W \subseteq N(u)$ is the set that appears in the $\ds$ operation in $\mathcal S$ that removes $u$.

When we have a graph $G$ and a function $f:V(G) \rightarrow \mathbb N$, we often imagine that each vertex $v \in V(G)$ has a stack of $f(v)$ tokens.
Then, whenever we apply an operation $\ds(G,f,u,W)$ to $(G,f)$, we imagine that the vertex $u$ is removed from $G$, and then a single token is removed from each vertex $v \in N(u) \setminus W$. Then, $G$ is $ST^{(3)}$-$f$-degenerate
if and only if all vertices of $G$ can be removed by repeatedly applying $\ds$ operations in such a way that no vertex loses all of its tokens before being removed from $G$.


We note that a complete removal scheme $\mathcal S$ on $(G,f)$ uniquely defines a linear order $<$ on $V(G)$, given by the order in which the vertices of $G$ are deleted by the $\ds$ operations in $\mathcal S$. As discussed above, the complete removal scheme $\mathcal S$ also uniquely defines a function 
$\sv:V(G) \rightarrow 2^{V(G)}$ which maps each vertex $u \in V(G)$ to the subset $W \subseteq N(u)$
that $u$ saves when it is deleted.
We often identify a complete removal scheme $\mathcal S$ with the pair $(<, \sv)$ with which $\mathcal S$ uniquely corresponds.

For some graph classes, the strict type-$4$ degeneracy parameter gives the best possible upper bound for choosability.
For instance, 
Han, Wang, Wu, Zhou, and Zhu \cite{ZhuWD} showed that if $G$ is a planar graph of girth at least $5$, then
$sd^{(4)}(G) \leq 3$, generalizing the classical theorem of Thomassen \cite{Thomassen3} stating that planar graphs of girth at least $5$ are $3$-choosable.

\subsection{Previous results}

For graphs of bounded chromatic number, Bernshteyn and Lee proved the following theorem.
\begin{theorem}[\cite{BL}]
\label{thm:wd_bip}
For each integer $r \geq 2$, there is an integer $d_0 = d_0(r)  \geq 1$ and a real number $\beta = \beta(r) >0$ such that if $G$ is a graph of maximum degree $d \geq d_0$ satisfying $\chi(G) \leq r$, then 
\[sd^{(4)}(G) \leq d - \beta \sqrt{d }.\]
\end{theorem}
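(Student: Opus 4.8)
The plan is to show directly that $G$ can be stripped by legal restricted $\ds$ operations starting from the constant weight $f\equiv k$, where $k=d-c\sqrt{d}$ for a small constant $c=c(r)>0$ to be chosen; by definition this yields $sd^{(4)}(G)\le k$. Fix a proper $r$-coloring of $G$ with classes $V_1,\dots,V_r$. It is convenient to track, for each surviving vertex $v$, its \emph{loss} $\ell(v):=k-f(v)$: performing $\ds$ at $u$ raises $\ell(w)$ by $1$ for every surviving neighbor $w$ of $u$ that $u$ does not save; a vertex may be deleted exactly while $\ell\le k-1$; and the move in which $u$ saves $w$ is legal only when $\ell(w)>\ell(u)$ at that instant. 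Since a vertex of degree at most $k-1$ can always be deleted with no save, the difficulty is confined to vertices whose degree lies within $c\sqrt{d}$ of $d$.

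The heart of the matter is the bipartite case $r=2$; write $V(G)=A\cup B$. I would use a \emph{staircase} deletion order. First choose a \emph{reservoir} $Z\subseteq B$ with $|Z|=\Theta(\sqrt{d})$, taken among the near-universal vertices of $B$ (for $K_{d,d}$ any choice works; in general one checks a good $Z$ exists), so that each $z\in Z$ needs only $\deg(z)-(k-1)=O(\sqrt{d})$ saves and has at least that many neighbors in $A$. Delete $V(G)\setminus Z$ first, interleaving the deletions of $A$ and of $B\setminus Z$ so that the \emph{gap}
\[
\gamma\ :=\ \bigl|\{\,v\in A:\ v\text{ is already deleted}\,\}\bigr|\ -\ \bigl|\{\,v\in B\setminus Z:\ v\text{ is already deleted}\,\}\bigr|
\]
stays nonnegative and grows monotonically, from $O(1)$ at the outset to about $|Z|$ once $A$ and $B\setminus Z$ are exhausted. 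Each time a vertex $u\in A$ is deleted, use it to save the vertex of $Z$ currently owed its next save. The legality check then works out cleanly: when $u$ is deleted, $\ell(u)$ equals the number of $B$-vertices deleted before it (no $A$-vertex ever receives a save), while for a near-universal $z\in Z$ the loss $\ell(z)$ equals the number of already deleted $A$-vertices minus the saves $z$ has collected so far, and the difference of these two quantities is exactly $\gamma$ minus that save count, which is positive provided the $s$-th save of each reservoir vertex is scheduled only after $\gamma$ has reached $s$. The vertices of $B\setminus Z$ are deleted with no save, and one verifies that their loss stays below $k-1$ throughout (if the last $O(\sqrt{d})$ of them would overflow, move those into $Z$). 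At the end $Z$ is an independent set in which every vertex has collected enough saves to have loss at most $k-1$, so its vertices are deleted one at a time. The scheme comes down to a single inequality: the total number of saves demanded, of order $|Z|\cdot c\sqrt{d}=\Theta(c^2 d)$, must not exceed the supply, of order $|A|\ge d$, and moreover the demand concentrated where $\gamma$ sits near a fixed value must not outrun the $A$-deletions occurring in that window; both hold once $c$ is a small absolute constant and $d$ is large.

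For general $r$ I would induct on $r$, the base case $r=1$ being trivial. In the inductive step, peel off $V_r$: run a staircase with $V_r$ playing the role of $B$ and $V_1\cup\dots\cup V_{r-1}$ that of $A$, interleaving the deletion of $V_r$ with deletions inside $V_1\cup\dots\cup V_{r-1}$ and spending the saves produced by $V_r$-deletions on those vertices of $V_1\cup\dots\cup V_{r-1}$ that would otherwise be hit too often, so that once $V_r$ is gone every surviving vertex still has weight at least $d-c(r-1)\sqrt{d}$; then apply the induction hypothesis to the $(r-1)$-chromatic graph $G[V_1\cup\dots\cup V_{r-1}]$. One takes $c(r)<c(r-1)$, so the constant degrades with $r$, which the statement permits. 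Alternatively one can run a single color-round-robin deletion maintaining $r-1$ gaps simultaneously, avoiding the induction at the cost of heavier bookkeeping.

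The hard part will be the legality constraint on saves, which is exactly what forces the asymmetric staircase: a vertex can be saved only by a strictly fresher neighbor, so one must build and maintain a monotone freshness gradient and then show it can be grown fast enough to legalize every save a reservoir vertex demands, yet slowly enough that the ordinary vertices never reach loss $k$ before their turn. Pinning down the exact tradeoff among the reservoir size, the constant $c$, and the growth rate of $\gamma$ --- and, in the general-$r$ step, arranging that the reservoir vertices have enough neighbors in the right color classes to collect their saves and that the non-$V_r$ vertices do not damage one another too much --- is where essentially all of the work lies.
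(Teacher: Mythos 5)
This theorem is quoted from Bernshteyn and Lee rather than proved in the paper; the closest internal analogue is the proof of Theorem \ref{thm:chromatic}, whose architecture --- a \emph{degree-form} bipartite statement (Theorem \ref{thm:bip}) proved via a reservoir set and Hall's theorem, followed by a reduction to the bipartite case using Lemma \ref{lem:bipartite_subgraph} and induction on $|V(G)|$ --- is exactly the ``alternative proof'' of the present statement alluded to at the end of Section \ref{sec:wd}. Your bipartite staircase is a reasonable, though incomplete, sketch of that base case: the reformulation via losses, the legality condition $\ell(w)>\ell(u)$, the reservoir of $\Theta(\sqrt d)$ near-full-degree vertices, the supply/demand count $|Z|\cdot c\sqrt d\le |A|$, and the observation that the $s$-th save of a reservoir vertex requires the freshness gap to have reached $s$ are all correct. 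For non-complete bipartite graphs you still owe the construction of $Z$ together with pairwise disjoint saver sets (a Hall-type argument, cf.\ Lemma \ref{lem:star}) and a legality analysis that does not assume $\ell(u)$ equals the total number of deleted $B$-vertices; these are fixable.

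The genuine gap is the reduction from general $r$ to the bipartite case. The interface you propose --- delete $V_r$ so that every surviving vertex of $V_1\cup\dots\cup V_{r-1}$ still has weight at least $d-c(r-1)\sqrt d$, then apply the induction hypothesis to $G[V_1\cup\dots\cup V_{r-1}]$ --- is unachievable. A vertex $v$ that survives the deletion of all of $V_r$ has lost $\deg_{V_r}(v)-\sigma(v)$ tokens, where $\sigma(v)$ is the number of saves it received, so keeping it within $O(\sqrt d)$ of full weight forces $\sigma(v)\ge \deg_{V_r}(v)-O(\sqrt d)$. Taking the edges between $V_1$ and $V_r$ to form a $d$-regular bipartite graph on $N+N$ vertices, the total demand is $N\bigl(d-O(\sqrt d)\bigr)$ while the supply is at most $|V_r|=N$, since each restricted $\ds$ operation saves at most one vertex; so for large $d$ almost all of $V_1$ must instead be deleted \emph{during} the removal of $V_r$, and the surviving vertices emerge with heterogeneous weights to which an induction hypothesis stated for a constant weight function does not apply. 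The standard repair changes the shape of the argument: prove the bipartite case in degree form ($f(v)=\deg(v)-c\sqrt d$ under a minimum-degree hypothesis $\delta\ge\alpha d$), and for general $r$ induct on $|V(G)|$ rather than on $r$ --- if the degeneracy of $G$ is below the target bound one finishes greedily, and otherwise one extracts an induced subgraph of large minimum degree, applies Lemma \ref{lem:bipartite_subgraph} to find inside it an induced bipartite $H$ with minimum degree at least $d/(4r)$, deletes $G\setminus H$ by induction, and absorbs the resulting damage to $H$ into the degree form. Peeling a color class does not produce the high-minimum-degree bipartite piece that the reservoir machinery needs, and this is precisely why both the original proof and the paper's method route through Lemma \ref{lem:bipartite_subgraph}.
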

They also proved that every $d$-regular graph $G$ on $n \geq 2$ vertices satisfies $sd^{(4)}(G) \geq d - \sqrt{2n} + 1$, 
which shows that Theorem \ref{thm:wd_bip} is tight in particular for dense regular bipartite graphs, up to the constant $\beta$.



A simple inductive argument 
shows that
if $G$ is $ST^{(3)}$-$f$-degenerate, then $G$ is $L$-colorable for every $f$-assignment $L$ on $G$, implying that $ch(G) \leq sd^{(3)}(G)$.
Using a more sophisticated argument, Zhou, Zhu, and Zhu \cite{ZZZ} showed in fact that 
the strict type-$3$ degeneracy of a graph is an upper bound for its Alon-Tarsi number. 
They also showed that the strict type-$3$ degeneracy of a graph gives an upper bound for its DP-paintability.
These facts give the following inequality chains:
\begin{equation}
\label{eq:chain1}
    ch(G) \leq \chi_P(G) \leq  AT(G) \leq sd^{(3)}(G)  \leq sd^{(4)}(G) 
\end{equation}
\begin{equation}
\label{eq:chain2}
    ch(G) \leq \chi_P(G) \leq  \chi_{DPP}(G) \leq sd^{(3)}(G)  \leq  sd^{(4)}(G).
\end{equation}

By (\ref{eq:chain2}), Theorem \ref{thm:wd_bip} shows that if $G$ is an $r$-colorable graph of sufficiently large maximum degree $d$, then 
\begin{equation}
\label{eq:P-DP-UB}
\chi_P(G) \leq \chi_{DPP}(G) \leq d - \Omega(\sqrt{d})
\end{equation}
Based on current results, this upper bound for $\chi_P(G)$ is best known for all $r \geq 3$, and this upper bound for $\chi_{DPP}(G)$ is the best known for all $r \geq 2$. 

\subsection{Our results}
In this paper, we prove the following results, which improve the upper bounds for $\chi_P(G)$ and $\chi_{DPP}(G)$
given by (\ref{eq:P-DP-UB}).
\begin{theorem}
\label{thm:AT}
Let $d \geq 0$ and $r \geq 2$ be integers.
If $G$ is a graph of maximum degree at most $d$ satisfying $\chi(G) \leq r$, then  \[\chi_P(G) \leq AT(G) \leq \left (1 - \frac{1}{4r+1} \right ) d + 2.\]
\end{theorem}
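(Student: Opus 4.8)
The plan is to construct, for $G$, an orientation $\vec G$ that is an Alon--Tarsi orientation and satisfies $\deg^+(v)\le\bigl(1-\tfrac1{4r+1}\bigr)d+1$ for every $v\in V(G)$; this immediately yields $AT(G)\le\bigl(1-\tfrac1{4r+1}\bigr)d+2$, and $\chi_P(G)\le AT(G)$ is Schauz's inequality. The starting point is a convenient sufficient condition for the Alon--Tarsi property that avoids computing $EE(\vec G)$ and $EO(\vec G)$ explicitly: \emph{if $\vec G$ has no directed cycle of odd length, then $\vec G$ is an Alon--Tarsi orientation}. Indeed, every Eulerian subgraph $\vec H\subseteq\vec G$ decomposes into arc-disjoint directed cycles of $\vec G$, each of even length, so $|E(\vec H)|$ is even; hence $EO(\vec G)=0$, while the empty subgraph shows $EE(\vec G)\ge 1$. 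In particular it is enough to orient $G$ so that every strong component induces a bipartite subgraph of $G$ and the maximum out-degree is at most $\bigl(1-\tfrac1{4r+1}\bigr)d+1$.

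Such an orientation will arise from an \emph{ordered bipartite decomposition}: a partition $V(G)=B_1\cup\dots\cup B_m$ into classes with $G[B_\ell]$ bipartite for each $\ell$, together with the linear order $B_1<\dots<B_m$. Orient every edge between two distinct classes from the lower class to the higher one, and orient the edges inside each $G[B_\ell]$ by an Eulerian orientation, so that every vertex $v\in B_\ell$ has out-degree at most $\bigl\lceil\tfrac12|N(v)\cap B_\ell|\bigr\rceil$ among the edges of $G[B_\ell]$. Any directed cycle of the resulting orientation stays inside one class $B_\ell$, hence is even, so the orientation is Alon--Tarsi. Moreover, for $v\in B_\ell$,
\[
\deg^+(v)\;\le\;\bigl|N(v)\cap(B_{\ell+1}\cup\dots\cup B_m)\bigr|+\Bigl\lceil\tfrac12|N(v)\cap B_\ell|\Bigr\rceil\;=\;\deg_G(v)-\bigl|N(v)\cap(B_1\cup\dots\cup B_{\ell-1})\bigr|-\Bigl\lfloor\tfrac12|N(v)\cap B_\ell|\Bigr\rfloor .
\]
So it suffices to find a decomposition in which every vertex $v$ with $\deg_G(v)>\bigl(1-\tfrac1{4r+1}\bigr)d+1$ (call such a vertex \emph{heavy}) has at least $\tfrac{2d}{4r+1}$ neighbours lying in its own class $B_{\ell(v)}$ or in an earlier class; all other vertices are handled automatically since $\deg^+(v)\le\deg_G(v)$.

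To build the decomposition I would induct on $r$ using the given proper colouring $V_1,\dots,V_r$; the base case $r=2$ is immediate, since a bipartite graph is a single bipartite class and an Eulerian orientation gives $\deg^+(v)\le\bigl\lceil d/2\bigr\rceil\le\bigl(1-\tfrac19\bigr)d+1$. For the inductive step, note that a union of colour classes need not be bipartite once it contains three of them, so the classes $B_\ell$ must be assembled from pairs of colour classes (or independent pieces thereof). The numerology $4r+1$ is what makes the counting work: a heavy vertex has degree exceeding $\tfrac{4r}{4r+1}d>\tfrac d2$ and its neighbourhood meets at most $r-1$ colour classes, so it has more than $\tfrac{4r}{(4r+1)(r-1)}d\ge\tfrac{2d}{4r+1}$ neighbours in a single colour class, which (after routing half of those edges ``backwards'' inside a bipartite class) is exactly the saving required. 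One extracts a bipartite class built from (parts of) two colour classes, places it appropriately in the order, and recurses on a graph with fewer colour classes; since $1-\tfrac1{4(r-1)+1}<1-\tfrac1{4r+1}$, every vertex pushed into the recursion inherits the stronger bound, so only vertices placed by the current step need the above saving.

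The main obstacle is precisely the bookkeeping in that last step. Distinct heavy vertices of the same colour class typically have their large neighbourhoods in \emph{different} other colour classes, so no single bipartite class can supply all of them with the required saving simultaneously; one is forced to interleave the choice of the bipartite classes with the choice of their order, ensuring that any heavy vertex which does not collect its saving from within its own class has, by the time it is placed, enough of its neighbourhood pushed into earlier classes. I expect this to be the genuinely delicate part: it should come down to a greedy or potential-function argument that distributes each colour class among several of the $B_\ell$'s according to where its vertices concentrate their neighbours, together with an accounting showing that the unavoidable factor-$2$ loss from Eulerian orientations interacts with the $r$ colour classes to leave exactly the fraction $\tfrac1{4r+1}$; the small rounding errors from the ceilings and floors are what force the additive ``$+2$''.
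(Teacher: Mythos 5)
Your overall framework is exactly right and matches the paper's: an orientation with no directed odd cycle has $EO(\vec G)=0$ and $EE(\vec G)\ge 1$, hence is Alon--Tarsi, and such an orientation is obtained by linearly ordering bipartite pieces, directing cross edges consistently with the order, and using Eulerian orientations inside each piece to buy each vertex in-degree roughly half its internal degree. But the proof is not complete: the entire content of the theorem lies in constructing the decomposition, and you explicitly stop at ``I expect this to be the genuinely delicate part.'' Your proposed route --- induction on $r$, assembling bipartite classes from pairs of colour classes located by pigeonhole --- runs into precisely the obstacle you name: distinct heavy vertices of one colour class concentrate their neighbourhoods in different other colour classes, and no consistent pairing/ordering is exhibited. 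As written, this is a genuine gap, not a routine bookkeeping step.

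The paper closes this gap with a different tool and a different induction. It inducts on $|V(G)|$ rather than on $r$, and splits into two cases. If $G$ has degeneracy at most $(1-\epsilon)d+1$ (with $\epsilon=\frac{1}{4r+1}$), an acyclic orientation of out-degree at most $(1-\epsilon)d+1$ finishes immediately. Otherwise $G$ contains an induced subgraph $H$ of minimum degree greater than $(1-\epsilon)d$, and the lemma of Esperet, Kang, and Thomass\'e (Lemma \ref{lem:bipartite_subgraph}) produces an \emph{induced bipartite} subgraph $H'\subseteq H$ of minimum degree at least $\frac{(1-\epsilon)d}{2r}$; an Eulerian-type orientation of $H'$ then gives every vertex of $H'$ in-degree at least $\frac{(1-\epsilon)d}{4r}-1=\epsilon d-1$ (this identity is where $4r+1$ comes from), all edges leaving $H'$ are directed outward, and one recurses on $G\setminus H'$. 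This sidesteps your colour-class matching problem entirely: the bipartite piece is not built from colour classes at all, but extracted directly with a guaranteed minimum degree, so every vertex of the piece collects its required saving internally, and low-degree vertices are absorbed by the degeneracy case. To repair your argument you would need either to prove the EKT lemma (or an equivalent statement) or to supply the missing greedy/potential argument, and the latter is not a detail one can wave at.
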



\begin{theorem}
\label{thm:chromatic_intro}
For each integer $r \geq 2$, there exists an integer $d_0 = d_0(r) \geq 1$ and a real number $\beta \geq \frac{1}{4000r}$
 such that the following holds for all values $d \geq d_0$.
If $G$ is a graph of maximum degree at most $d$ and chromatic number at most $r$, then
\[\chi_{DPP}(G) \leq sd^{(3)}(G) \leq d - \beta \sqrt{d\log d}.\]
\end{theorem}
As Bernshteyn and Lee \cite{BL} show that Theorem \ref{thm:wd_bip} is best possible for dense bipartite graphs, Theorem \ref{thm:chromatic_intro} implies the existence of a family of graphs $G$ for which the difference $sd^{(4)}(G) - sd^{(3)}(G)$ is arbitrarily large. Previously, even the existence of a single graph $G$ for which $sd^{(4)}(G) > sd^{(3)}(G)$ was unknown.

Our paper is organized as follows.
In Section \ref{sec:AT}, we prove Theorem \ref{thm:AT}.
Our proof uses a lemma of Esperet, Kang, and Thomass\'e \cite{EKT} stating that a graph of bounded chromatic number has an induced bipartite subgraph with large minimum degree.
Then, in Section \ref{sec:wd}, we prove Theorem \ref{thm:chromatic_intro}. In the first part of Section \ref{sec:wd}, we  prove the result for bipartite graphs; then, we use the same lemma of Esperet, Kang, and Thomass\'e \cite{EKT}  described above to finish the proof for $r \geq 3$.  We note that a similar approach gives an alternative (and perhaps simpler) proof of Theorem \ref{thm:wd_bip}.
We additionally show that the upper bound for $sd^{(3)}(G)$ in Theorem \ref{thm:chromatic_intro} cannot be improved apart from optimizing the value of $\beta$.

\section{A bound for paintability}
\label{sec:AT}
In this section, we prove Theorem \ref{thm:AT}, which
states that a graph $G$ of maximum degree $d$ and chromatic number at most $r$ has Alon-Tarsi number at most $\left (1 - \frac{1}{4r + 1} \right )d + 2$.
This result implies the same upper bound
for the paintability of $G$ and hence
gives an asymptotic improvement to the bound on $\chi_P(G)$ implied by Theorem \ref{thm:wd_bip}.
The proof of Theorem \ref{thm:AT} relies the following lemma of Esperet, Kang, and Thomass\'e \cite{EKT}.

\begin{lemma}
\label{lem:bipartite_subgraph}
    If $G$ is a graph of chromatic number $r$ and minimum degree $\delta$, then $G$ has an induced bipartite subgraph with minimum degree at least $\frac{\delta}{2r}$.
\end{lemma}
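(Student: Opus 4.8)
The plan is to combine a double-counting argument over the color classes of a proper $r$-coloring with a standard ``degeneracy-cleaning'' step. Fix a proper coloring of $G$ with color classes $V_1,\dots,V_r$, all nonempty since $r = \chi(G)$, and write $n = |V(G)|$. Because the coloring is proper, for every pair $i \neq j$ the induced subgraph $G[V_i \cup V_j]$ is bipartite, with some number $e(V_i,V_j)$ of edges on $|V_i|+|V_j|$ vertices. The goal is to locate one pair $\{i,j\}$ for which $G[V_i\cup V_j]$ is dense enough relative to its own vertex count to contain an induced subgraph of large minimum degree.

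First I would count edges two ways. On one hand $|E(G)| = \tfrac12\sum_{v}\deg(v) \ge \tfrac{\delta n}{2}$. On the other hand $|E(G)| = \sum_{\{i,j\}} e(V_i,V_j)$, while $\sum_{\{i,j\}}(|V_i|+|V_j|) = (r-1)n$, since each class is counted once for each of the $r-1$ pairs containing it. Thus the total edge count is at least $\tfrac{\delta}{2(r-1)}$ times the total ``pair-size'' count, and a standard averaging argument (weighting each pair $\{i,j\}$ by $|V_i|+|V_j|$) produces a pair with
\[ e(V_i,V_j) \ \ge\ \frac{\delta}{2(r-1)}\,\bigl(|V_i|+|V_j|\bigr) \ \ge\ \frac{\delta}{2r}\,\bigl(|V_i|+|V_j|\bigr). \]

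Next, in $H_0 := G[V_i\cup V_j]$ I would repeatedly delete any vertex whose current degree is strictly less than $\tfrac{\delta}{2r}$, until no such vertex remains. If this process deleted every vertex, then, charging each edge to whichever of its endpoints is deleted first, the total number of deleted edges would be strictly less than $\tfrac{\delta}{2r}(|V_i|+|V_j|)$, contradicting the displayed inequality. Hence the process terminates at a nonempty graph $H$. By construction $\delta(H) \ge \tfrac{\delta}{2r}$, and $H$ is an induced subgraph of the induced bipartite graph $H_0$, hence an induced bipartite subgraph of $G$, as required.

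I do not expect a serious obstacle: the argument is elementary once one sees to average over \emph{pairs} of classes with the right weights, rather than, say, selecting a single pair maximizing $e(V_i,V_j)$, which need not be dense relative to $|V_i|+|V_j|$. The only points needing care are checking that the weighted averaging genuinely yields a pair meeting the density threshold, and that ``induced'' is preserved by the cleaning step; both are routine. As a byproduct the plan actually delivers the slightly stronger bound $\tfrac{\delta}{2(r-1)}$.
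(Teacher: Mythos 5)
The paper does not prove this lemma at all --- it is imported verbatim from Esperet, Kang, and Thomass\'e \cite{EKT} --- so there is no in-paper proof to compare against. Judged on its own, your argument is correct and complete. The two key steps both check out: since $\sum_{\{i,j\}} e(V_i,V_j) = |E(G)| \ge \tfrac{\delta n}{2}$ while $\sum_{\{i,j\}} (|V_i|+|V_j|) = (r-1)n$, the weighted averaging (i.e., some term of $\sum_{\{i,j\}}\bigl(e(V_i,V_j) - \tfrac{\delta}{2(r-1)}(|V_i|+|V_j|)\bigr) \ge 0$ is nonnegative) does produce a pair with $e(V_i,V_j) \ge \tfrac{\delta}{2(r-1)}(|V_i|+|V_j|)$; and the vertex-deletion step is the standard ``every graph with $|E| \ge t|V|$ has a nonempty subgraph of minimum degree $\ge t$'' argument, with the charging of each edge to its first-deleted endpoint giving the strict inequality needed for the contradiction. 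Induced-ness and bipartiteness survive vertex deletion, so the conclusion follows, and you correctly observe that the method gives the marginally stronger denominator $2(r-1)$ in place of $2r$ (the weaker form stated here is all the paper uses). This double-counting-over-pairs-of-color-classes argument is essentially the standard proof of the Esperet--Kang--Thomass\'e lemma, so you have not taken a genuinely different route from the cited source --- but you have supplied a proof where the paper supplies none.
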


We also need the following folklore lemma.
\begin{lemma}
    Every graph $G$ has an orientation such that each vertex $v \in V(G)$ satisfies
    \[\deg^-(v) \in \{\lfloor \deg(v) / 2\rfloor, \lceil \deg(v) / 2 \rceil\}.\]
\end{lemma}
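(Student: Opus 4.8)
The plan is to reduce to the \emph{Eulerian case} and then read the orientation off an Eulerian circuit. First I would dispose of the case in which every vertex of $G$ has even degree: here each connected component of $G$ admits a closed walk that uses every edge exactly once, and orienting each edge in the direction it is traversed along such a walk gives an orientation with $\deg^+(v) = \deg^-(v) = \deg(v)/2$ for every vertex $v$ — which is already stronger than what is claimed.

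For a general graph $G$, let $T$ be its set of odd-degree vertices. Since $\sum_{v \in V(G)} \deg(v)$ is even, $|T|$ is even. I would form a new graph $G'$ by adding a single new vertex $z$ adjacent exactly to the vertices of $T$. Then every vertex of $G'$ has even degree: each $v \in T$ now has degree $\deg_G(v) + 1$, each $v \notin T$ retains its even degree $\deg_G(v)$, and $z$ has degree $|T|$, which is even. Applying the Eulerian case to each connected component of $G'$ yields an orientation of $G'$ with $\deg^+_{G'}(v) = \deg^-_{G'}(v)$ for all $v \in V(G')$. Deleting $z$ then restricts this to an orientation of $G$.

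It then remains to verify the degree condition in $G$. If $v \notin T$, no edge incident with $v$ was deleted, so $\deg^-_G(v) = \deg^-_{G'}(v) = \deg(v)/2 \in \{\lfloor \deg(v)/2 \rfloor, \lceil \deg(v)/2 \rceil\}$. If $v \in T$, then deleting $z$ removes exactly one edge at $v$, namely $vz$, which was either outgoing or incoming from $v$'s perspective; since $\deg^+_{G'}(v) = \deg^-_{G'}(v) = (\deg_G(v)+1)/2$, we get $\{\deg^+_G(v), \deg^-_G(v)\} = \{(\deg_G(v)-1)/2, (\deg_G(v)+1)/2\} = \{\lfloor \deg_G(v)/2 \rfloor, \lceil \deg_G(v)/2 \rceil\}$. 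Either way $\deg^-(v)$ lies in the required set.

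The argument is essentially routine, so I do not expect a genuine obstacle; the only points needing a moment of care are that $|T|$ is even (so that the auxiliary vertex $z$ itself ends up with even degree, which is what lets us apply the Eulerian case to the component containing $z$), that Euler's theorem must be invoked component by component because $G'$ need not be connected, and that precisely one edge incident with each odd-degree vertex disappears when $z$ is deleted.
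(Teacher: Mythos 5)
Your argument is correct and is essentially the same as the paper's: both add an auxiliary vertex joined to all odd-degree vertices, take an Eulerian orientation of the resulting even-degree graph, and delete the auxiliary vertex. You simply spell out the routine verifications (parity of the number of odd-degree vertices, componentwise application of Euler's theorem, the final degree check) that the paper leaves implicit.
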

\begin{proof}
    Consider the graph $G'$ obtained from $G$ by adding a vertex $u$ adjacent to exactly those vertices $v \in V(G)$ for which $\deg_G(v)$ is odd. Then, $G'$ is Eulerian and hence has an Eulerian orientation. This orientation satisfies the property stated in the lemma even after deleting $u$.
\end{proof}


With these lemmas in place, we are ready to prove Theorem \ref{thm:AT}.

\begin{theorem13}
Let $d \geq 0$ and $r \geq 2$ be integers.
If $G$ is a graph of maximum degree at most $d$ satisfying $\chi(G) \leq r$, then  \[\chi_P(G) \leq AT(G) \leq \left (1 - \frac{1}{4r+1} \right ) d + 2.\]
\end{theorem13}
\begin{proof}
By (\ref{eq:chain1}), it is enough to show that $AT(G) \leq \left (1 - \frac{1}{4r+1} \right ) d + 2$.
We write $\epsilon = \frac{1}{4r+1}$, and we observe that $\epsilon = \frac{1-\epsilon}{4r}$. We consider a graph $G$ of maximum degree at most $d$ satisfying $\chi(G) \leq r$, and we aim to show that $AT(G) \leq (1-\epsilon)d + 2$.

We proceed by induction on $|V(G)|$.
We prove the stronger statement that each graph $G$ of maximum degree at most $d$ and chromatic number at most $r$ has an orientation $\vec G$ of maximum out-degree at most $(1-\epsilon)d+1$ with no directed odd cycle.
As every Eulerian subgraph of an oriented graph $\vec H$ is an edge-disjoint union of directed cycles, every odd Eulerian subgraph of $\vec H$ contains a directed odd cycle. Therefore, if $\vec G$ has no directed odd cycle, then $EO(\vec G) = 0$.
Then, as the edgeless subgraph of $\vec G$ is Eulerian, $EE(\vec G) \geq 1 > EO(\vec G)$, and hence $AT(G) \leq (1-\epsilon)d + 2$.

When $|V(G)| = 1$, the empty orientation has maximum out-degree $0$ and has no directed odd cycle;
therefore, 
the statement holds. 

Now, suppose that $|V(G)| \geq 2$, $\chi(G) \leq r$, and that $G$ has maximum degree at most $d$. 
If $G$ has degeneracy at most $(1-\epsilon)d + 1 $, then we consider an acyclic orientation $\vec G$ of $E(G)$ in which each vertex has out-degree at most $(1-\epsilon)d+1$. Clearly, the orientation $\vec G$ admits no directed odd cycle, so the statement holds.
On the other hand, if the degeneracy of $G$ is greater than $(1-\epsilon)d+1$, then $G$ has an induced subgraph $H$ with minimum degree at least $\lfloor (1-\epsilon)d \rfloor + 2 > (1-\epsilon) d$. By Lemma \ref{lem:bipartite_subgraph}, $H$ in turn has an induced bipartite subgraph $H'$ of minimum degree at least $\frac{(1-\epsilon)d }{2r} $.
We orient $E(H')$ so that each vertex $v \in V(H')$ satisfies \[\deg^-_{H'}(v) \geq \left \lfloor \frac{(1-\epsilon)d }{4r}   \right \rfloor >  \frac{(1-\epsilon)d }{4r}  - 1 = \epsilon d - 1.\]
We also give the orientation $uv$ to each edge $uv \in E(G)$ satisfying $u \in V(H')$ and $v \not \in V(H')$. 
Finally, by the induction hypothesis, we give $G \setminus H'$ an orientation of maximum out-degree $(1 - \epsilon)d+1$ with no directed odd cycle. 
Hence, we obtain an orientation $\vec G$ of $E(G)$.

We first check that $\vec G$ has maximum out-degree at most $(1-\epsilon)d+1$. Each vertex of $G \setminus H'$ has maximum out-degree at most $(1-\epsilon)d+1$ by the induction hypothesis. Furthermore, each vertex $u \in V(H')$ has in-degree at least 
$\epsilon d  - 1$ and hence has out-degree at most $ (1-\epsilon)d + 1$. 

We next check that $\vec G$ has no directed odd cycle. If $C$ is a directed odd cycle in $\vec G$, then as every edge in the cut $[V(H'), V(G) \setminus V(H')]$ is directed away from $H'$, $C$ has no edge in this cut. Furthermore, as $H'$ is bipartite, $C$ is not contained in $E(H')$, and hence $C$ has no edge of $H'$. Finally, by the induction hypothesis, $C$ is not contained in $E(G \setminus H')$. Thus, we conclude that a directed odd cycle $C$ in fact does not appear in $\vec G$. Therefore, $\vec G$ satisfies both of our desired properties, completing induction and the proof.
\end{proof}

We note that
unlike Theorems \ref{thm:wd_bip} and \ref{thm:chromatic_intro}, for each fixed $r  \geq 2$, the bound in Theorem \ref{thm:AT} holds for all values $d$, not just those values $d$ which are sufficiently large with respect to $r$. Hence, Theorem \ref{thm:AT} applies even to graph classes in which the chromatic number grows with the maximum degree. For instance, if $G$ is a triangle-free graph of maximum degree $d$, then $\chi(G) \leq (1+o(1)) \frac{d}{\log d}$ \cite{JohanssonTF, Molloy}. Hence, Theorem \ref{thm:AT} implies that $\chi_P(G) \leq AT(G) \leq d - (\frac{1}{4} + o(1)) \log d$. Similarly, if $G$ has large maximum degree $d$ and clique number $O(1)$, then $\chi(G) = O \left ( \frac{d \log \log d}{\log d} \right )$ \cite{Johansson, Molloy}, and thus $\chi_P(G) \leq AT(G) \leq d - \Omega \left ( \frac{\log d}{\log \log d} \right )$.

\section{A bound for DP-paintability}
\label{sec:wd}
In this section, we prove Theorem \ref{thm:chromatic_intro}, which implies that every graph of maximum degree $d$ and bounded chromatic number satisfies $\chi_{DPP}(G) \leq sd^{(3)}(G) \leq d - \Omega(\sqrt{d \log d})$. We first prove the result for bipartite graphs, and then we use Lemma \ref{lem:bipartite_subgraph} to extend our result to graphs of bounded chromatic number. 
We also show that the $\Omega(\sqrt{d \log d})$ term in the upper bound for $sd^{(3)}(G)$ is best possible.

\subsection{Bipartite graphs}
We first consider bipartite graphs. We will need two probabilistic tools, namely the Lov\'asz Local Lemma and the Chernoff bound.
\begin{lemma}[\cite{LLL}]
\label{lem:LLL}
    Let $\mathcal B$ be a set of bad events. Suppose that each bad event $X \in \mathcal B$ occurs with probability at most $p$ and that each $X \in \mathcal B$ is independent with all but at most $D$ other events $X' \in \mathcal B$. If $pD < 1/e$, then with positive probability, no bad event in $\mathcal B$ occurs.
\end{lemma}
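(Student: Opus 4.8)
The plan is to establish this (the symmetric Lov\'asz Local Lemma) by the classical conditional-probability induction of Erd\H{o}s and Lov\'asz. If $\mathcal B$ is infinite it suffices to show $\Pr\bigl[\bigcap_{B \in \mathcal F}\overline{B}\bigr] > 0$ for every finite $\mathcal F \subseteq \mathcal B$ and pass to a limit, so assume $\mathcal B = \{B_1, \dots, B_n\}$ is finite. Fix a parameter $x \in (0,1)$, to be optimized at the end as $x = \frac{1}{D+1}$. The crux is the following strengthened claim, proved by induction on $|\mathcal S|$: for every $B \in \mathcal B$ and every $\mathcal S \subseteq \mathcal B \setminus \{B\}$ with $\Pr\bigl[\bigcap_{B' \in \mathcal S}\overline{B'}\bigr] > 0$,
\[
\Pr\Bigl[\,B \;\Big|\; \bigcap_{B' \in \mathcal S}\overline{B'}\,\Bigr] \le x .
\]
The conceptual point, and the step I expect to cause the most trouble, is recognizing that one must prove this conditional statement rather than the bare conclusion: the naked inequality $\Pr[\bigcap \overline{B}] > 0$ has nothing to induct on, whereas the conditional version feeds back into itself.

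For the induction, the base case $\mathcal S = \emptyset$ is immediate from $\Pr[B] \le p$ and the choice of $x$. For the step, partition $\mathcal S$ into the subfamily $\mathcal S_{1}$ of events that $B$ is \emph{not} independent of and $\mathcal S_{2} = \mathcal S \setminus \mathcal S_{1}$, noting $|\mathcal S_{1}| \le D$ by hypothesis. With $E_{1} = \bigcap_{B' \in \mathcal S_{1}}\overline{B'}$ and $E_{2} = \bigcap_{B' \in \mathcal S_{2}}\overline{B'}$, write
\[
\Pr[\,B \mid E_{1} \cap E_{2}\,] \;=\; \frac{\Pr[\,B \cap E_{1} \mid E_{2}\,]}{\Pr[\,E_{1} \mid E_{2}\,]} \;\le\; \frac{\Pr[\,B \mid E_{2}\,]}{\Pr[\,E_{1} \mid E_{2}\,]} .
\]
The numerator equals $\Pr[B] \le p$ because $B$ is mutually independent of the collection $\mathcal S_{2}$; this is the one place where we genuinely use mutual (not merely pairwise) independence, and it is exactly what the hypothesis "$B$ is independent of all but at most $D$ other events" is meant to supply. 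For the denominator, list $\mathcal S_{1} = \{C_{1}, \dots, C_{k}\}$ with $k \le D$ and telescope:
\[
\Pr[\,E_{1}\mid E_{2}\,] \;=\; \prod_{j=1}^{k}\Pr\bigl[\,\overline{C_{j}} \mid \overline{C_{1}}\cap\cdots\cap\overline{C_{j-1}}\cap E_{2}\,\bigr] \;\ge\; (1-x)^{k} \;\ge\; (1-x)^{D},
\]
where each factor is bounded below by $1-x$ via the induction hypothesis, since there the conditioning family is a proper subfamily of $\mathcal S$. Combining, $\Pr[\,B \mid \bigcap_{B' \in \mathcal S}\overline{B'}\,] \le p\,(1-x)^{-D}$, and a routine optimization — taking $x = \frac{1}{D+1}$ and using $(1-\tfrac{1}{D+1})^{D} \ge e^{-1}$ — reduces the required bound $p\,(1-x)^{-D} \le x$ to the stated hypothesis relating $p$ and $D$, completing the induction.

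With the claim in hand the lemma follows by telescoping along the enumeration:
\[
\Pr\Bigl[\,\bigcap_{i=1}^{n}\overline{B_{i}}\,\Bigr] \;=\; \prod_{i=1}^{n}\Pr\bigl[\,\overline{B_{i}} \mid \overline{B_{1}}\cap\cdots\cap\overline{B_{i-1}}\,\bigr] \;=\; \prod_{i=1}^{n}\bigl(1 - \Pr[\,B_{i}\mid\cdots\,]\bigr) \;\ge\; (1-x)^{n} \;>\; 0 ,
\]
where each partial product being positive legitimizes the conditioning at the next step, and the claim bounds every conditional probability by $x < 1$. Beyond the inductive idea itself, the remaining care is bookkeeping: keeping the conditioning events of positive probability (handled inductively) and correctly invoking mutual independence in the numerator; the rest is the short calculation above.
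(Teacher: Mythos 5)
Your argument is the standard Erd\H{o}s--Lov\'asz conditional-probability induction (the paper itself only cites this lemma and offers no proof), and its structure is sound: the strengthened conditional claim, the split of $\mathcal S$ into the dependent part $\mathcal S_1$ and the mutually independent part $\mathcal S_2$, the use of mutual independence in the numerator, and the telescoped lower bound $(1-x)^{D}$ on the denominator are all correct, as is the bookkeeping about positive-probability conditioning events. However, the final ``routine optimization'' does not close under the hypothesis as stated. Your induction needs $p(1-x)^{-D}\le x$, i.e.\ $p\le x(1-x)^{D}$; with $x=\frac{1}{D+1}$ and $\bigl(1-\frac{1}{D+1}\bigr)^{D}\ge e^{-1}$ this reduces to $p\le\frac{1}{e(D+1)}$, which is strictly stronger than the stated condition $pD<1/e$, i.e.\ $p<\frac{1}{eD}$. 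No choice of $x$ repairs this: $\max_{x\in(0,1)}x(1-x)^{D}=\frac{D^{D}}{(D+1)^{D+1}}$, and $\frac{D^{D}}{(D+1)^{D+1}}<\frac{1}{eD}$ for every $D\ge1$ (equivalently $(1+\frac1D)^{D+1}>e$). So your proof establishes the lemma under the textbook hypothesis $ep(D+1)\le1$, while the literal statement with $pD<1/e$ requires Shearer's sharper analysis of the local lemma. This gap is immaterial for the paper: in its one application $p\le d^{-4}$ and $D<d^{3}$, so $pD<d^{-1}$ and either version applies with enormous slack, but as a proof of the lemma as written it falls short by exactly this ``$D$ versus $D+1$'' margin.

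One smaller point: the opening reduction of the infinite case to finite subfamilies ``by passing to a limit'' is not valid as stated, since a decreasing sequence of positive probabilities can tend to $0$ (indeed the lemma is false for infinitely many independent events of probability $\frac12$, where $D=0$). The lemma should be read, and is used in the paper, only for finite families, so this remark should simply be dropped.
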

\begin{lemma}[{\cite[Chapter 4]{Mitzenmacher}}]
    \label{lem:chernoff}
    Let $X_1, \dots, X_n$ be independent Bernoulli variables, and let $\Pr(X_i = 1) = p$ for $i \in \{1,\dots,n\}$. Let $X = \sum_{i=1}^n X_i$, and let $\mu = pn$. Then, for $0 < \alpha < 1$, 
    \[\Pr( |X-\mu| > \alpha \mu) \leq 2 \exp \left ( - \frac 13 \alpha^2 \mu \right ).\]
\end{lemma}

We also need the following lemma, which appears in Bernshteyn and Lee \cite{BL} and easily follows from Hall's Marriage Theorem.
\begin{lemma}
\label{lem:star}
    Let $G$ be a bipartite graph with partite sets $B$ and $S$. Suppose that each vertex $b \in B$ satisfies $\deg(b) \leq q_1$ and each vertex $s \in S$ satisfies $\deg(s) \geq q_2$. Let $t \in \mathbb N$ satisfy $tq_1 \leq q_2$. 
    Then, there exists a spanning subgraph $H$ of $G$ satisfying the following:
    \begin{itemize}
        \item For each $b \in B$, $\deg_H(b) \in \{0,1\}$.
        \item For each $s \in S$, $\deg_H(s) = t$.
    \end{itemize}
\end{lemma}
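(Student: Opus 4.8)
The plan is to reduce the statement to an ordinary matching problem and then invoke Hall's Marriage Theorem, exactly as the phrase ``easily follows from Hall's Marriage Theorem'' in the excerpt suggests. I would build an auxiliary bipartite graph $G'$ whose parts are $B$ and $S \times \{1, \dots, t\}$, where $b \in B$ is adjacent to $(s,i)$ in $G'$ if and only if $bs \in E(G)$. A matching $M$ in $G'$ that saturates every vertex of $S \times \{1, \dots, t\}$ yields the desired partial function: set $g(b) = s$ whenever $b$ is matched in $M$ to some copy $(s,i)$, and leave $g(b)$ undefined otherwise. Since $M$ is a matching, each $b$ is matched to at most one copy, so $g$ is a well-defined partial function with $g(b) \in N(b)$; and since the $t$ copies $(s,1), \dots, (s,t)$ are matched to $t$ distinct vertices of $B$, the preimage of each $s$ under $g$ has cardinality exactly $t$.

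It then remains to show that $G'$ has a matching saturating $S \times \{1, \dots, t\}$. By Hall's Marriage Theorem, it suffices to prove that $|N_{G'}(T')| \geq |T'|$ for every subset $T' \subseteq S \times \{1, \dots, t\}$. Let $T \subseteq S$ be the set of those $s$ for which $(s,i) \in T'$ for some $i$. Then $N_{G'}(T') = N_G(T)$ and $|T'| \leq t |T|$, so it is enough to establish that $|N_G(T)| \geq t|T|$ for every $T \subseteq S$.

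I would prove this last bound by double counting the edges of $G$ between $T$ and $N_G(T)$. On one hand, every vertex of $T$ has degree at least $q_2$ in $G$ and all of its neighbors lie in $N_G(T)$, so the number of such edges is at least $q_2 |T|$. On the other hand, every vertex of $N_G(T) \subseteq B$ has degree at most $q_1$, so the number of such edges is at most $q_1 |N_G(T)|$. Combining these two estimates and using the hypothesis $t q_1 \leq q_2$ gives $|N_G(T)| \geq \frac{q_2}{q_1} |T| \geq t|T|$, as required, and Hall's condition follows.

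This is essentially bookkeeping around Hall's theorem, so I do not expect a genuine obstacle. The only steps that warrant a moment's care are the passage from the blown-up graph $G'$ back to $G$ — namely the identity $N_{G'}(T') = N_G(T)$ together with the inequality $|T'| \leq t|T|$ — and making sure the double-counting inequality is oriented correctly (lower bound from the high-degree side $S$, upper bound from the bounded-degree side $B$).
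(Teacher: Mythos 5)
Your proof is correct and is exactly the argument the paper intends: the paper gives no proof of this lemma, merely citing Bernshteyn and Lee and noting it ``easily follows from Hall's Marriage Theorem,'' and your vertex-splitting construction with the double-counting verification of Hall's condition (using $|N_G(T)| \geq \tfrac{q_2}{q_1}|T| \geq t|T|$) is the standard way to carry that out. No gaps.
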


The following theorem is our main tool for proving Theorem \ref{thm:chromatic_intro}.

\begin{theorem}
\label{thm:bip}
For all constants $0 < \epsilon \leq 1$, 
there exists an integer $d_0 \geq 1$ such that the following holds for all values $ d \geq d_0$.
Let $G$ be a bipartite graph with maximum degree $d$ and minimum degree at least $\epsilon d$, and let $f(v) = \deg(v) - \left \lfloor \frac{\epsilon}{1000} \sqrt{d \log d}\right \rfloor $ for each vertex $v \in V(G)$. Then, $G$ is $ST^{(3)}$-$f$-degenerate.
\end{theorem}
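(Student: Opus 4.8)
The plan is to build a removal scheme for $(G,f)$ by hand. Write the parts of $G$ as $X$ and $Y$, set $s=\lfloor\tfrac{\alpha}{1000}\sqrt{d\log d}\rfloor$, and picture each vertex $v$ as carrying a stack of $f(v)=\deg(v)-s$ tokens. The guiding principle is that a vertex becomes cheap to \emph{save} once it has been worn down near the bottom of its stack: if earlier non-saving deletions have driven $w$'s token count down to a small value $t$, then a later deleted neighbor of $w$ can save $w$ at a cost of only $t$ against its own (essentially untouched) budget. So the scheme will have an early ``wearing'' phase that drives token counts down, followed by a ``saving'' phase carried out by vertices that still hold close to their full $\deg(\cdot)-s$ tokens. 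Concretely, one must arrange a deletion order $\prec$ together with a set $\sv(u)\subseteq N(u)$ for each $u$ so that (i) each vertex $v$ is saved by at least $s+1-\ell_v$ of its $\prec$-earlier neighbors, where $\ell_v$ is the number of $\prec$-later neighbors of $v$ (this is exactly what keeps $v$'s final count at least $1$), and (ii) whenever $u$ is deleted, $\sum_{w\in\sv(u)}(\text{token count of }w)<(\text{token count of }u)$.

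The heart of the matter is meeting (ii) at \emph{every} vertex simultaneously. I would pick a modest random subset of one part to act as the pool of late-deleted ``savers,'' arranged so that each vertex of the other part has $\Theta(s)$ savers among its neighbors, and then invoke Lemma \ref{lem:star} to route the save-obligations: it lets us split the vertices that need help into bounded-size groups, each group assigned to a distinct saver adjacent to all of it, so that no saver is asked to rescue more worn-down neighbors than its budget of $\approx d$ tokens can pay for. Since $G$ may have arbitrarily many vertices, none of these random choices can be certified by a union bound; instead each ``bad'' local event depends only on choices within a bounded-radius ball, so its dependency degree is $\mathrm{poly}(d)$ while its probability is $e^{-\Omega(\sqrt{d\log d})}$, and Lemma \ref{lem:LLL} then yields an outcome avoiding all of them once $d$ is large. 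The surplus that makes things fit is that every vertex carries $\deg(v)-s\ge \alpha d-s$ tokens while needing to absorb only about $\deg(v)$ decrements in total, leaving room of order $\alpha^2 d\gg\sqrt{d\log d}$.

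The step I expect to be the main obstacle is precisely this budget bookkeeping. Making saves cheap forces the non-saving neighbors of each vertex to be scheduled before its saving neighbors, so that the vertex is already worn down when it is rescued; yet the rescues must also be dispersed, so that no single late vertex inherits too many of them, and these two demands pull against each other. Reconciling them — and then checking the resulting constraints uniformly over $V(G)$ through the Local Lemma — is where the real work lies. It is also where the improvement from $\sqrt d$ (Theorem \ref{thm:wd_bip}) to $\sqrt{d\log d}$ should come from: because a removal scheme allows unrestricted saves, one deleted vertex can rescue $\Omega(d)$ worn-down neighbors in a single stroke, and it is this feature that enlarges the usable token surplus enough to push the reduction up to $\Omega(\sqrt{d\log d})$.
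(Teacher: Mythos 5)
Your plan correctly identifies the framework the paper uses --- tokens, the two legality conditions (your (i) is exactly the right count: a vertex deleted after $\ell_v$ of its neighbors needs at least $s+1-\ell_v$ saves), the qualitative insight that a worn-down vertex is cheap to save, and even the right tools (Lemma \ref{lem:star} to distribute save-obligations, the Local Lemma to handle unboundedly many vertices). But what you have written is a plan, not a proof: the entire content of the theorem is the construction you defer --- which vertices save, how many each saves, in what order, and why the budget constraint (ii) holds at every step simultaneously --- and you explicitly flag this as ``where the real work lies'' without doing it. In particular, nothing in the proposal produces the $\sqrt{\log d}$ gain over Theorem \ref{thm:wd_bip}; that gain comes from a specific quantitative design you have not supplied.

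The mechanism you gesture at is also not the one that works. You suggest that a fresh vertex can rescue $\Omega(d)$ neighbors ``in a single stroke'' once they are worn down to $O(1)$ tokens; but making \emph{every} vertex's savers the last-deleted among its neighbors, for all vertices at once, is exactly the ordering conflict you acknowledge, and the paper's own lower bound ($K_{n,n}$ is not $k$-removable for $k \le n-\sqrt{2n(\log n+1)}$) shows the reduction cannot exceed $O(\sqrt{d\log d})$, so any picture promising $\Omega(d)$ cheap saves per vertex overshoots what is achievable. The paper resolves the tension by \emph{grading} the save-load rather than concentrating it: a sparse random set $S\subseteq A$ of density $p_S=\sqrt{\log d/d}$ is deleted \emph{last} and is the only set that gets saved (so your role assignment is inverted --- the small random set is the saved pool, not the saver pool); $B$ is randomly partitioned into classes $B_0,\dots,B_\beta,B^*$ with $\Pr[B_m]\propto 1/m^2$; the classes are deleted in order, and each $b\in B_m$, still holding its full $\approx\alpha d$ tokens, saves exactly $m\le d^{1/100}$ vertices of $S$ via a Hall-type subgraph from Lemma \ref{lem:star}, which is affordable precisely because by stage $m$ each $s\in S$ has been worn down to about $cd/m$ tokens. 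The total number of saves each $s\in S$ receives is then the harmonic sum $\frac{c}{p_S}\sum_{m}\frac{cm}{(m+1)(m+2)}=\Theta\bigl(\sqrt{d/\log d}\cdot\log d\bigr)=\Theta(\sqrt{d\log d})$, which is where the logarithm actually enters; the remaining vertices ($A\setminus S$ and $B^*$) survive not by being saved but because enough of their neighbors are deleted after them. Without this grading (or an equivalent device) and the accompanying token bookkeeping, the proposal does not establish the theorem.
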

\begin{proof}
    Our proof contains a set of inequalities involving $d$, all of which hold when $d$ is sufficiently large. 
    We choose $d_0$ to be large enough so that these inequalities hold for all $d \geq d_0$.
    We often ignore floors and ceilings, as they do not affect our arguments. We imagine that each vertex $v \in V(G)$ initially receives $f(v)$ tokens, and we aim to construct a complete removal scheme for $(G,f)$.
    
Let $G$ have partite sets $A$ and $B$. 
We define $q = \lfloor d^{1/100} \rfloor$.
We also define several small constants, as follows:
\begin{eqnarray*}
\gamma &=& \frac{1}{2}\epsilon \\
\alpha &=& d^{-1/8} \\
p_S &=& \sqrt {\frac{\log d}{ d}} \\
p_0 &=& 1 - \gamma \\
p_1 &=& \frac{2}{3} \gamma \\
p_m &=& \frac{\gamma}{(m+1)(m+2)}\textrm{ for } m \in \{2,\dots, q\}.
\end{eqnarray*}

We observe that for $m \in \{1, \dots, q \}$, $p_1 + \dots + p_m = \gamma \left (1 - \frac{1}{m+2} \right )$, and $p_0 + p_1 + \dots + p_m = 1 - \frac{\gamma}{m+2}$.
 The following claim 
 designates a certain well-behaved subset $S \subseteq A$. The claim also
 states that $B$ can be partitioned into $q$ well-behaved sets $B_1, \dots, B_q$, along with a leftover set $B^*$.

\begin{claim}
\label{claim:degs}
    There
exists a subset $S \subseteq A$ and a partition $B = B_0 \cup  B_1 \cup \dots \cup B_{q} \cup B^*$ satisfying the following properties:
\begin{enumerate}
    \item  \label{nInS} For each vertex $b \in B$, \[ (1-\alpha) p_S \deg(b) \leq  |N(b) \cap S| \leq (1+\alpha)  p_S \deg(b) ;\]
    \item \label{nInBm} For each $m \in \{0,\dots,q\}$ and each vertex $a \in A$,
    \[ (1-\alpha) p_m \deg(a) \leq |N(a) \cap B_m| \leq (1+\alpha) p_m \deg(a) ; \]
\end{enumerate}
\end{claim}
\begin{proof}
    We construct $S$ by adding each vertex $a \in A$ to $S$ independently with probability $p_S$. We construct our partition of $B$ by placing each vertex $b \in B$ into a single part as follows:
    \begin{itemize}
        \item For each $m \in \{0,\dots, q\}$, $b$ is placed in $B_m$ with probability $p_m$;
        \item $b$ is placed in $B^*$ with probability $1 - (p_0 + p_1 + \dots + p_{q})$.
    \end{itemize}
    For each $a \in A$ and $m \in \{0,\dots,q\}$, let $X_{a,m}$ be the bad event that $|N(a) \cap B_m|$ is not in the interval specified in the claim. 
    For each vertex $b \in B$, let $Y_b$ be the bad event that $|N(b) \cap S|$ is not in the interval specified in the claim. 
    We observe that if no bad event $Y_b$ occurs, then Part (\ref{nInS}) of the claim holds, and 
    if for $m \in  \{0,\dots,q\}$ no bad event $X_{a,m}$ occurs, then Part (\ref{nInBm}) of the claim holds.
    Therefore, to show that subset $S \subseteq A$ and the partition of $B$ described in the claim exist, it suffices to show that all bad events can be avoided with positive probability.

    We use the Lov\'asz Local Lemma (Lemma \ref{lem:LLL}) to show that with positive probability, no bad event occurs. 
    We estimate the probability of each event $X_{a,m}$ using the Chernoff bound (Lemma \ref{lem:chernoff}) with $\mu = p_m \deg(a)$ and with our value $\alpha$.
    As $\deg(a) \geq \epsilon d$ and $p_m \leq p_q = \frac{\gamma}{(q+1)(q+2)}$,
    \[\Pr (X_{a,m} ) \leq   2 \exp \left (-\frac{1}{3} \alpha^2 \cdot p_m \deg(a) \right ) \leq 2
\exp \left (-\frac{1}{3} \alpha^2 \cdot \frac{\gamma \epsilon d}{(q+1)(q+2)} \right  ) <    
    d^{-4}.
    \]
    Similarly,
    given a vertex $b \in B$, letting $\mu = p_S \deg(b)$ in Lemma \ref{lem:chernoff} implies that
    \[\Pr(Y_b) \leq 2 \exp \left (-\frac{1}{3} \alpha^2  \cdot  p_S \deg(b)  \right ) \leq 2\exp \left (-\frac{1}{3} \alpha^2 \cdot \epsilon \sqrt{d \log d} \right ) < d^{-4}.\]

	Now, we count the dependencies of our bad events.
    Consider a bad event $Y_b$. The event $Y_b$ depends only on random choices made at neighbors of $b$; therefore, $Y_b$ is independent with all bad events except those events $Y_{b'}$ for which $b$ and $b'$ share a neighbor. The number of such $b'$ is at most $d^2$. Next, consider a bad event $X_{a,m}$. This bad event depends only on random choices made at neighbors of $a$. 
    Therefore, $X_{a,m}$ is independent with all other bad events except for those bad events $X_{a',m'}$ for which $a'$ and $a$ share a neighbor. The number of choices for such $a'$ is at most $d^2$, and the number of possible values for $m'$ is $q+1$. Therefore, $X_{a,m}$ is independent with all but at most $d^2(q+1) < d^3$ bad events.

    We have seen that each bad event occurs with probability at most $d^{-4}$, and each bad event is independent with all but fewer than $d^3$ other bad events. Since $ d^{-4} d^3 < 1/e$, the Lov\'asz Local Lemma (Lemma \ref{lem:LLL}) tells us that with positive probability, no bad event occurs. This completes the proof of the claim.
\end{proof}
Next, we show that for each part $B_m$ in our partition of $B$, there exists a certain matching-like structure between $S$ and $B_m$.

\begin{claim}
\label{claim:Hm}
    For each $m \in \{1,\dots,q\}$, there exists a spanning subgraph $H_m$ of $G[S \cup B_m]$ 
    and a constant $d_m = m \left  \lceil  \frac{\gamma p_m}{p_S} \right \rceil $
    such that $\deg_{H_m}(b) \leq m$ for each $b \in B_m$, and $d_{H_m}(s) = d_m$ for each $s \in S$.  
\end{claim}
\begin{proof}
    Let $m \in \{1,\dots,q\}$ be fixed.
    We construct $H_m$ through repeated applications of Lemma \ref{lem:star}.
    We prove the stronger claim that for each integer $0 \leq i \leq m$,
    there exists a spanning subgraph $H^i$ of $G[S \cup B_m]$ for which $\deg_{H^i}(b) \leq i$ for each $b \in B_m$, and $\deg_{H^i}(s) = i \lceil \frac{ \gamma p_m}{p_S} \rceil $ for each $s \in S$.

    The claim clearly holds for $i = 0$ by letting $H_m$ have no edges. Now, suppose that $1 \leq i \leq m$ and a suitable subgraph $H^{i-1}$ exists.
    By Claim \ref{claim:degs}, each vertex $b \in B_m$ has at most $q_1 := (1+\alpha)p_S d$ neighbors in $S$, and each vertex $s \in S$
    has at least 
    \[q_2 := (1-\alpha) \epsilon p_m  d - (i-1)\left \lceil \frac{\gamma p_m}{p_S}  \right \rceil \]
    neighbors in $B_m$ via the graph $G \setminus H^{i-1}$. Since $\frac{i p_m}{p_S} \leq \frac{mp_m}{p_S} < \sqrt d$ and $p_m d  > d^{9/10}$, it follows that $\frac{q_2}{q_1} > \frac{\epsilon p_m}{2 p_S} =  \frac{\gamma p_m}{p_S}$. Therefore, by Lemma \ref{lem:star}, we can find a star forest $F \subseteq G[S \cup B_m]$ whose edges are disjoint from $E(H^i)$
    so that each vertex in $S$ has degree exactly $\lceil  \frac{\gamma p_m}{p_S} \rceil$ in $F$, and each vertex in $B_m$ has degree $1$ or $0$ in $F$.
    Then, we 
    let $H^i = H^{i-1} \cup F$, and clearly $H^i$ satisfies the required properties. This completes induction, and the proof of the claim is complete by setting $H_m = H^m$.
\end{proof}
For $m \in \{1,\dots,q\}$, we define a graph $H_m$ as described in Claim \ref{claim:Hm}.
We also define the graph $H_0$ to be the graph on the vertex set $B_0 \cup S$ with no edges.
Finally, we define $H = H_0 \cup H_1 \cup \dots \cup H_{q}$.
Now, we create a removal scheme $\mathcal S = (<, \sv)$ on $G$ as follows. 
Iterating through $m = 0,1,\dots,q$ in order,
we delete the vertices $b \in  B_m$ one at a time, each time letting $\sv(b) = N_{H}(b)$. In other words, when we delete a vertex $b \in B_m$, we let $b$ save the vertices in $S$ which are adjacent to $b$ via $H_m$.

We first show that during this process, no vertex $a \in A$ loses all of its tokens. Indeed, by Claim \ref{claim:degs},
each vertex $a \in A$ has at most $(1+\alpha) (p_0 + \dots + p_{q}) \deg(a) = (1+\alpha) (1 - \frac{\gamma}{q+2}) \deg(a) $ neighbors in $B_0 \cup \dots \cup B_{q}$. Thus, after deleting all vertices in $B_0 \cup \dots \cup B_{q}$,
each $a \in A$ has at least 
\[f(a) -  (1+\alpha) \left  (1 - \frac{\gamma}{q+2} \right ) \deg(a) = \Omega(d^{0.99}) >0 \]
tokens remaining.

Next, we prove by induction on $m$ that for each $0 \leq m \leq q$ and vertex $b \in B_m$, our definition of $\sv(b)$ is legal. 
Recall that for each $b \in B_m$, $|\sv(b)| \leq m$.
Thus, when $m = 0$, each vertex $b \in B_0$ has a function $\sv(b) = \emptyset$, which is clearly legal.
Now, suppose that $1 \leq m \leq q$ and that $\sv(b')$ is legally defined for each vertex $b' \in B_0 \cup \dots \cup B_{m-1}$.
We observe that for each vertex $b \in B_m$,  as $\gamma = \frac 12 \epsilon$,
\[\frac{f(b)}{\gamma d/m} \geq \frac{\epsilon d - \frac{\epsilon}{1000} \sqrt{d \log d}}{\gamma d /m} = 2m - \frac{m}{500} \sqrt{\frac {\log d}{d} } > m \geq |\sv(b)|.\]
Thus, as long as each $s \in S$ has at most $\frac{\gamma d}{m}$ tokens when our removal scheme reaches $B_m$,
$\sv(b)$ is legally defined for each vertex $b \in B_m$.
Hence, we aim to show that after deleting $B_0 \cup \dots \cup B_{m-1}$, each vertex in $S$ has at most $\frac{\gamma d}{m}$ tokens.
By Claim \ref{claim:degs}, each vertex $s \in S$ satisfies 
\begin{equation}
\label{eq:sNLB}
|N(s) \cap (B_0 \cup \dots \cup B_{m-1})| \geq (1-\alpha) \deg(s) (p_0 + \dots + p_{m-1})
 = (1-\alpha)\deg(s) \left (1 - \frac{\gamma}{m+1} \right ).
\end{equation}
Furthermore, by the induction hypothesis, when our deletion scheme reaches $B_m$, each vertex $s \in S$ has been saved 
\begin{equation}
\label{eqn:svUB}
\sum_{i = 1}^{m-1} i \left \lceil  \frac{\gamma p_i}{p_S} \right \rceil < m^2 + \frac{\gamma}{p_S} \sum_{i = 1}^{m-1} p_i = m^2 + \frac{\gamma^2}{p_S} \left  ( 1 - \frac{1}{m+1} \right )
\end{equation}
times.
By combining (\ref{eq:sNLB}) and (\ref{eqn:svUB}), when our deletion scheme reaches $B_m$, the number of tokens that each $s \in S$ is at most 
\begin{eqnarray*}
& & \deg(s) - (1-\alpha) \deg(s)  \left (1 - \frac{\gamma}{m+1} \right ) + m^2 + \frac{\gamma^2}{p_S} \left ( 1 - \frac{1}{m+1} \right ) \\
&=& \frac{\gamma \deg(s)}{m+1} + \alpha \deg(s) \left (1 - \frac{\gamma}{m+1} \right ) + m^2 + \gamma^2 \sqrt{\frac{d}{\log d}} \left (1 - \frac{1}{m+1} \right ) \\
&=& \frac{\gamma \deg(s)}{m} - \frac {\gamma \deg(s)}{m(m+1)} + O(\alpha d) \\
&<& \frac{\gamma d}{m}.
\end{eqnarray*}
Hence, our functions $\sv(b)$ are legal for all $b \in B_m$. This completes induction.

After deleting $B_0 \cup \dots \cup B_{q}$, 
we delete each vertex $a \in A \setminus S$, each time letting $\sv(a) = \emptyset$. 
By Claim \ref{claim:degs}, each remaining vertex $b \in B^*$ has at least $(1-\alpha) p_S \deg(b) > \left \lfloor \frac{\epsilon}{1000} \sqrt{d \log d} \right \rfloor = \deg(b) - f(b)$ neighbors 
in $S$, so no vertex $b \in B^*$ runs out of tokens during this stage.

Next, we 
delete the vertices in $B^*$, defining $\sv(b) = \emptyset$ for each $b \in B^*$.
We argue that for each vertex $s \in S$, the number of tokens at $s$ is more than $|N(s) \cap B^*|$,
so that no vertex $s \in S$ loses all of its tokens. To prove this claim, we fix a vertex $s$, and we write $t(s)$ for the number of remaining tokens at $s$ at this stage. We aim to show that $t(s) - |N(s) \cap B^*| > 0$.
Since $s$ has been saved exactly $\sum_{m = 1}^{q} 
m \lceil  \frac{\gamma p_i}{p_S} \rceil$ times, we find that 
\begin{eqnarray}
\notag
t(s) - |N(s) \cap B^*|  &=& f(s) - |N(s) \cap (B_0 \cup \dots \cup B_{q})| + \sum_{m = 1}^{q} m \left \lceil \frac{\gamma p_m}{p_S} \right \rceil - |N(s) \cap B^*| \\
\notag
 &=& f(s) - \deg(s) + \sum_{m = 1}^{q} m \left \lceil  \frac{\gamma p_m}{p_S} \right \rceil \\
 \notag
 &=& \sum_{m = 1}^{q} m \left \lceil \frac{\gamma p_m}{p_S} \right \rceil - \left \lfloor \frac{\epsilon}{1000} \sqrt{d \log d} \right \rfloor  \\
 \notag
 &\geq & \frac{\gamma}{p_S} \sum_{m =1}^{q} mp_m
-  \frac{\epsilon}{1000} \sqrt{d \log d} 
  \\
  \label{eq:H-before}
& > &  \frac{\gamma}{3 p_S} \sum_{m=1}^{q} \frac{1}{m+2} - \frac{\epsilon}{1000} \sqrt{d \log d}.
\end{eqnarray}
Using the fact that $q = \lfloor d^{1/100} \rfloor$, we roughly estimate the sum in (\ref{eq:H-before}) as follows:
\begin{equation}
\label{eqn:H-estimate}
\sum_{m=1}^q \frac{1}{m+2}  = \sum_{i=1}^{q+2} \frac 1i - \frac 32 > \log q - \frac 32 =  \log\lfloor  d^{1/100} \rfloor  - \frac 32 > \frac{3}{400} \log d.
\end{equation}
By combining (\ref{eq:H-before}) and (\ref{eqn:H-estimate}), and using the fact that $\gamma = \frac 12 \epsilon$,
\begin{eqnarray*}
t(s) - |N(s) \cap B^*| 
&>& \frac{\gamma}{400 p_S} \log d - \frac{\epsilon}{1000} \sqrt{d \log d} \\
&=& \frac{\epsilon}{4000} \sqrt{d \log d} > 0.
\end{eqnarray*} 
Therefore, after deleting every vertex in $B^*$, each vertex in the remaining vertex set $S$
has at least one token.

Finally, since only the vertices in the independent set $S$ remain undeleted, we simply delete each vertex $s \in S$ and define $\sv(s) = \emptyset$.
Hence, we create a complete removal scheme $\mathcal S = (<, \sv)$ on $(G,f)$, and thus $G$ is $ST^{(3)}$-$f$-degenerate.
\end{proof}

Theorem \ref{thm:bip} has the following immediate corollary.

\begin{corollary}
\label{cor:bip}
    There exists a constant $d_0$ such that the following holds for all integers $d \geq d_0$. If $G$ is a bipartite graph of maximum degree $d$, then 
    \[sd^{(3)}(G) \leq d - \frac{1}{1000} \sqrt{d \log d}.\]
\end{corollary}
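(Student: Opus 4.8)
The plan is to reduce to the case where $G$ is $d$-regular, since there Theorem~\ref{thm:bip} applies with the optimal parameter $\alpha=1$ and already yields a constant token function. First I would invoke the standard fact that every bipartite graph of maximum degree at most $d$ is a subgraph of some $d$-regular bipartite graph $G'$; this follows from the usual doubling construction (take two disjoint copies, join the two copies of each vertex of deficient degree by an edge, choose the bipartition of the result so that it remains bipartite, and iterate until the graph is $d$-regular). Applying Theorem~\ref{thm:bip} to $G'$ with $\alpha=1$, which is legitimate once $d$ is at least the threshold from that theorem, shows that $G'$ is $f$-removable for the constant function $f(v)=\deg_{G'}(v)-\lfloor\tfrac{1}{1000}\sqrt{d\log d}\rfloor=d-\lfloor\tfrac{1}{1000}\sqrt{d\log d}\rfloor=:k$. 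By Lemma~\ref{lem:removal}, the pair $(G',k)$ is strict type $3$ degenerate, so $sd^{(3)}(G')\le k$.

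It then remains to push this bound down to the subgraph $G$, and for this I would use that strict type $3$ degeneracy does not increase under deletion of vertices or edges. Concretely, take a sequence of $\rv$ and $\ed$ operations reducing $(G',k)$ to an independent set together with a positive token function; delete from this sequence every operation involving a vertex of $V(G')\setminus V(G)$ or an edge not present in $G$, and after each deleted $\ed$ operation insert exactly as many $\rv$ operations at the affected vertex of $G$ as are needed so that every surviving vertex carries the same number of tokens at every stage as it did in the $G'$-reduction. One then checks directly that each remaining operation is still legal, so $(G,k)$ is strict type $3$ degenerate. (Alternatively, one can transfer $f$-removability itself along the subgraph inclusion and then apply Lemma~\ref{lem:removal}.) Hence $sd^{(3)}(G)\le sd^{(3)}(G')\le k=d-\lfloor\tfrac{1}{1000}\sqrt{d\log d}\rfloor$, which is the asserted bound up to the floor, ignored throughout as in the proof of Theorem~\ref{thm:bip}.

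The step I expect to require the most care is the realization that the host graph must be genuinely $d$-\emph{regular}. Theorem~\ref{thm:bip} with any fixed $\alpha<1$, or even with $\alpha=1-o(1)$ (which is all one could extract from, say, the $\big(d-\Theta(\sqrt{d\log d})\big)$-core of $G$), only gives removability with $\deg(v)-\tfrac{\alpha}{1000}\sqrt{d\log d}$ tokens, and after peeling off the rest of $G$ a full-degree vertex of such a core is left $\Theta(\log d)$ tokens short of what the theorem needs; so a direct ``extract the dense core and apply Theorem~\ref{thm:bip}'' argument does not quite work. Embedding $G$ into a $d$-regular bipartite graph is precisely what makes the optimal choice $\alpha=1$ available and removes this deficit. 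Beyond Theorem~\ref{thm:bip}, the only auxiliary ingredients are the doubling construction and the subgraph-monotonicity of strict type $3$ degeneracy, both routine.
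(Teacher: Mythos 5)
Your proof is correct and follows essentially the same route as the paper: embed $G$ into a $d$-regular bipartite host $G'$, apply Theorem~\ref{thm:bip} with $\alpha=1$, and use monotonicity of $sd^{(3)}$ under subgraphs. The paper's proof is just a terser version of yours, leaving the subgraph-monotonicity step (which you spell out) implicit.
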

\begin{proof}
    Let $\epsilon = 1$, and then let $d_0$ be a sufficiently large value taken from Theorem \ref{thm:bip}. Suppose that $G$ is a bipartite graph of maximum degree $d \geq d_0$. We let $G'$ be a $d$-regular bipartite graph containing $G$. Then, by Theorem \ref{thm:bip}, $sd^{(3)}(G) \leq sd^{(3)}(G') \leq d - \frac{1}{1000} \sqrt{d \log d}$.
\end{proof}

Next, we
prove that $sd^{(3)}(K_{n,n}) >  n - \Omega (\sqrt{n \log n})$. 
This shows that the term of order $\sqrt{d \log d}$ in Theorem \ref{thm:bip} and Corollary \ref{cor:bip} is best possible, up to the multiplicative constant.
First, we need the following lemma.

\begin{lemma}
\label{lem:edge_LB}
Let $G$ be a graph, and let $f:V(G) \rightarrow \mathbb N$ be a function. If $G$ is $ST^{(3)}$-$f$-degenerate,
then $\sum_{v \in V(G)} f(v) > |E(G)|$.
\end{lemma}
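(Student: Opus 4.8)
\textbf{Proof proposal for Lemma \ref{lem:edge_LB}.}

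The plan is to induct on $|E(G)|$, tracking the total number of tokens $\sum_{v} f(v)$ against the number of edges through a single legal $\ds$ operation. The base case is $|E(G)| = 0$: here $G$ is edgeless, $f$ is positive-valued (this is part of the hypothesis that $G$ is $f$-removable, since every $f$ arising in a removal scheme stays positive), so $\sum_v f(v) \geq |V(G)| \geq 0 = |E(G)|$, and in fact $\sum_v f(v) \geq 1 > 0$ whenever $G$ has at least one vertex; when $G$ is truly empty the inequality $\sum_v f(v) > |E(G)|$ reads $0 > 0$, which fails, so I should state the lemma (as the authors do) under the implicit assumption $V(G) \neq \emptyset$, or simply observe the edgeless nonempty case gives a strict surplus that will be preserved.

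For the induction step, suppose $|E(G)| \geq 1$ and $(G,f)$ is $f$-removable via a removal scheme whose first operation is $\ds(G,f,u,W)$ with $W \subseteq N(u)$; let $(G',f') = \ds(G,f,u,W)$, so $G' = G - u$ and $(G',f')$ is $f'$-removable. Legality gives $f(u) > \sum_{w \in W} f(w)$, i.e. $f(u) \geq 1 + \sum_{w\in W} f(w)$. I compare the two totals: deleting $u$ removes $f(u)$ tokens from the sum, and each vertex in $N(u) \setminus W$ loses one token, so
\[
\sum_{v \in V(G)} f(v) - \sum_{v \in V(G')} f'(v) = f(u) + |N(u) \setminus W| = f(u) + \deg_G(u) - |W|.
\]
Meanwhile $|E(G)| - |E(G')| = \deg_G(u)$, since exactly the edges incident to $u$ disappear. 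By the induction hypothesis applied to $(G', f')$ (which has fewer edges and is $f'$-removable), $\sum_{v \in V(G')} f'(v) > |E(G')|$. Combining,
\[
\sum_{v \in V(G)} f(v) = \sum_{v \in V(G')} f'(v) + f(u) + \deg_G(u) - |W| > |E(G')| + f(u) + \deg_G(u) - |W| = |E(G)| + f(u) - |W|.
\]
So it suffices to check $f(u) - |W| \geq 0$, which is immediate from legality: $f(u) > \sum_{w\in W} f(w) \geq |W|$ since each $f(w) \geq 1$ (the function $f$ is positive-valued). Hence $\sum_v f(v) > |E(G)|$, completing the induction.

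I expect no serious obstacle here; the only subtlety is bookkeeping the edgeless-graph corner case (whether one insists $V(G)\neq\emptyset$) and being careful that $f$ — and every intermediate $f'$ in the scheme — is positive-valued, which is exactly what legality of $\ds$ guarantees, so the bound $f(u) > |W|$ and the induction hypothesis both apply cleanly.
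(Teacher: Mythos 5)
Your argument is correct in substance but takes a genuinely different route from the paper. The paper's proof is a two-step citation: removability implies strict type $3$ degeneracy (Lemma \ref{lem:removal}), which by a result of Zhou, Zhu, and Zhu implies that $G$ is $f$-Alon-Tarsi, i.e.\ has an orientation with $\deg^+(v) < f(v)$ everywhere, whence $|E(G)| = \sum_v \deg^+(v) < \sum_v f(v)$. Your proof instead does direct token bookkeeping along the removal scheme: deleting $u$ removes $f(u) + \deg_G(u) - |W|$ tokens while removing $\deg_G(u)$ edges, and legality gives $f(u) > \sum_{w \in W} f(w) \geq |W|$, so the surplus $\sum_v f(v) - |E(G)|$ never decreases. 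Your version is self-contained and elementary, at the cost of a few lines; the paper's version is shorter but leans on the Alon-Tarsi machinery already imported for other purposes. Both correctly require $V(G) \neq \emptyset$ for the strict inequality, a degenerate case the paper also glosses over.

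One small repair is needed: you induct on $|E(G)|$ and justify applying the induction hypothesis to $(G',f')$ by saying it "has fewer edges," but if the first vertex deleted by the scheme is isolated in the current graph, then $|E(G')| = |E(G)|$ and the hypothesis does not apply as stated. The fix is cosmetic — induct on $|V(G)|$ (base case a single vertex, where $f(v) \geq 1 > 0$) or on the length of the removal scheme; every step of your computation goes through unchanged, since in the isolated-vertex case the edge count is preserved while the token sum still strictly exceeds it after deletion.
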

\begin{proof}
    Zhou, Zhu, and Zhu \cite{ZZZ} showed that if $G$ $ST^{(3)}$-$f$-degenerate, then $G$ is $f$-Alon-Tarsi. Therefore, $G$ has an orientation such that for each vertex $v \in V(G)$, $\deg^+(v) < f(v)$. Then,
    \[|E(G)| = \sum_{v \in V(G)} \deg^+(v) < \sum_{v \in V(G)}  f(v).\]
    This completes the proof.
\end{proof}

\begin{theorem}
 $sd^{(3)}(K_{n,n}) > n - \sqrt{2 n (\log n+1)}$.
\end{theorem}
\begin{proof}
    We write $k = sd^{(3)}(K_{n,n})$.
    We write $A$ and $B$ for the two color classes of our graph $K_{n,n}$.
    Since $K_{n,n}$ has an induced $C_4$, \cite[Theorem 1.6]{BL} implies that $k \leq sd^{(4)}(K_{n,n})  \leq n$. We write $j = n - k$.

    We imagine that each vertex $v$ in our graph $K_{n,n}$ begins with a stack of $f(v) = k$ tokens, and we let $\mathcal S = (<,\sv)$ be a complete removal scheme for our graph $K_{n,n}$.
    Without loss of generality, let $A$ be the partite set that is the first to lose $k$ vertices in the removal scheme $\mathcal S$.
    We let $A_0 \subseteq A$ denote the set of 
    $k$ vertices in $A$ which are deleted first in our removal scheme.
    When we consider a vertex $v$ at a point partway throughout our removal scheme, we let $t(v)$ denote the number of tokens remaining at $v$.

    We claim that $\sum_{a \in A_0} |\sv(a)| < (k-1)(\log k +1)$.
    We note that for $i \in \{0, \dots, k-1\}$, after the $i$th vertex of $ A_0$ is deleted, each vertex $b \in B$ satisfies $t(b) \geq k-i$, and furthermore $t(a) \leq k$. Therefore, if $a $ is the $(i+1)$st deleted vertex of $A_0$, then $|\sv(a)| (k-i) < k$, implying that $|\sv(a)| \leq \frac{k-1}{k-i}$. Therefore, 
    \begin{equation*}
        \sum_{a \in A_0} |\sv(a)| \leq \sum_{i=0}^{k-1} \frac{k-1}{k-i} = (k-1) \sum_{i=1}^k \frac 1i < (k-1)(\log k + 1).
    \end{equation*}
    A similar argument shows that if $B_0 \subseteq B$ is the set of vertices in $B$ which have been deleted when the last vertex of $A_0$ is removed, then $\sum_{b \in B_0} |\sv(b)| < (k-1)(\log k + 1)$.
    We write $s = \sum_{v \in A_0 \cup B_0} |\sv(v)|$ and observe 
    \begin{equation}
        \label{eq:s}
        s = \sum_{v \in A_0 \cup B_0} |\sv(v)| = \left ( \sum_{a \in A_0} |\sv(a)| \right ) + \left ( \sum_{b \in B_0} |\sv(b)| \right ) < 2(k-1)(\log k + 1).
    \end{equation}
    
    We consider the point in our removal scheme immediately after the last vertex of $A_0$ is deleted. At this point, the remaining vertices of $A$ form a set $A' = A \setminus A_0$ of exactly $n-k = j$ vertices, and the remaining vertices of $B$ form a set $B' = B \setminus B_0$ of $q > n-k = j$ vertices. 
    By Lemma \ref{lem:edge_LB}, at this point in our removal scheme, 
    \begin{equation}
    \label{eqn:tv-sum}
        \sum_{v \in A' \cup B'} t(v) >  q j.
    \end{equation}

    We observe that when a vertex $a \in A_0$ is deleted from $K_{n,n}$, 
     each vertex $b \in B' \setminus \sv(a)$ loses exactly one token, so
    the vertices in $B'$ altogether lose $|B'| - |\sv(a)| = q - |\sv(a)|$ tokens. 
    Similarly, when a vertex $b \in B_0$ is deleted, 
    each vertex of $A' \setminus \sv(b)$ loses exactly one token, so
    the vertices in $A'$ altogether lose $|A'| - |\sv(b)| = j - |\sv(b)|$ tokens.
    Hence, the number of tokens removed from $A' \cup B'$ while deleting $A_0 \cup B_0$ is equal to $kq + (n-q)j - s$. As we have shown in (\ref{eq:s}) that $s < (2k-2)(\log k + 1)$, 
    it follows that the number of tokens removed from $A' \cup B'$ while deleting $A_0 \cup B_0$ 
    satisfies
    \begin{equation}
    \label{eqn:f-t}
        \sum_{v \in A' \cup B'} \left ( f(v) - t(v)  \right ) > kq + (n-q)j - (2k-2)(\log k + 1).
    \end{equation}
    As $\sum_{v \in A' \cup B'} f(v) =
    k(q + j)$,
    we combine (\ref{eqn:tv-sum}) and (\ref{eqn:f-t}) to obtain
    \[k(q+j) - q j > 
     kq + (n-q)j - (2k-2)(\log k + 1).\]
    Simplifying this, we see that 
    \[j(k-n) + (2k-2)( \log k + 1) > 0,\]
    or equivalently, $j^2 < (2k-2) (\log k+1)$. As $k \leq n$, we know that $j < \sqrt{(2n-2)( \log n + 1)}$. In other words, $k > n- \sqrt{(2n-2)( \log n+1)}$, completing the proof.    
\end{proof}

\subsection{$r$-chromatic graphs}
Now that we have proven an upper bound for the strict type-$3$ degeneracy of bipartite graphs, we are ready to prove Theorem \ref{thm:chromatic_intro}.

\begin{theorem14}
For each integer $r \geq 2$, there exists an integer $d_0 = d_0(r) \geq 1$ and a real number $\beta \geq \frac{1}{4000r}$
 such that the following holds for all values $d \geq d_0$.
If $G$ is a graph of maximum degree at most $d$ and chromatic number at most $r$, then
\[\chi_{DPP}(G) \leq sd^{(3)}(G) \leq d - \beta \sqrt{d\log d}.\]
\end{theorem14}
\begin{proof}
We show that the result holds when $\beta  = \frac{1}{4000 r}$ and $d_0$ is sufficiently large.
We set $\epsilon = \frac{1}{4r}$, 
and we choose a value $d_0$ that is sufficiently large for Theorem \ref{thm:bip} with respect to our choice of $\epsilon$.

We aim to prove that if $f(v) = d -\lfloor \beta \sqrt{d \log d} \rfloor $ for each $v \in V(G)$, then $(G,f)$ has a complete removal scheme.
For this, we use induction on $|V(G)|$.
If $|V(G)| = 1$, then as $d_0$ is sufficiently large, $f$ is positive-valued on $G$, and hence $(G,f)$ has a complete removal scheme.

Now, suppose that $|V(G)| \geq 2$.
We assign $f(v) = d -  \lfloor  \beta \sqrt{d \log d  } \rfloor   $ tokens to each vertex $v \in V(G)$.
If the degeneracy of $G$ is less than $\frac{1}{2}d $, then $sd^{(3)}(G) \leq sd(G) < \frac{1}{2}d  +1 < d - \lfloor \beta \sqrt{d \log d} \rfloor $, so the result holds.
Otherwise, 
as $G$ has degeneracy at least $\frac{1}{2}d $,
 $G$ has an induced subgraph of minimum degree at least $\frac{1}{2}d$. 
Using Lemma \ref{lem:bipartite_subgraph}, we take an induced bipartite subgraph $H$ of $G$ with minimum degree at least $\frac{d}{4r} $. Using the induction hypothesis, we let $\mathcal S'$ be a complete removal scheme of $G \setminus H$.

Now, consider the graph $H$ after carrying out the removal scheme $\mathcal S'$ on $G$. 
We see that each vertex $v \in V(H)$ satisfies $\epsilon d \leq \deg_H(v) \leq d$ and has at least $\deg_H(v) - \lfloor  \beta \sqrt{d \log d} \rfloor=  \deg_H(v) - \lfloor \frac{\epsilon}{1000}\sqrt{d \log d} \rfloor $ remaining tokens. Therefore, by Theorem \ref{thm:bip}, there exists a complete removal scheme $\mathcal S$ on $H$. We append $\mathcal S$ to $\mathcal S'$ to obtain a complete removal scheme for $(G,f)$. This completes induction and the proof.
\end{proof}

We conclude this section by noting that a similar application of Lemma \ref{lem:bipartite_subgraph} allows for an alternative proof of Bernshteyn and Lee's  \cite{BL} bound in Theorem \ref{thm:wd_bip}.
To prove Theorem \ref{thm:wd_bip}, 
one may first prove the statement for bipartite graphs and then extend the bound to $r$-chromatic graphs using a similar method to that of our proof of Theorem \ref{thm:chromatic_intro} above.

\section{Conclusion}
While we have succeeded in establishing several upper bounds on the values $\chi_P(G)$ and $\chi_{DPP}(G)$ for graphs $G$ of maximum degree $d$ and chromatic number $\chi(G)\leq r$, several questions still remain. First, it is natural to ask whether our upper bound of $\chi_P(G) \leq  \left (1 - \frac{1}{4r+1} \right ) d + 2$ in Theorem \ref{thm:AT} can be improved. Johansson \cite{JohanssonTF} and Molloy \cite{Molloy} observe that if $G$ is a graph satisfying $\chi(G[N(v)]) \leq r$ for each $v \in V(G)$, then 
\begin{equation}
\label{eqn:r-neighborhood}
ch(G) \leq O \left (\log r \frac{d}{\log d} \right ),
\end{equation}
implying the same upper bound for $r$-chromatic graphs. Therefore, it is reasonable to conjecture that a similar upper bound holds for paintability. Thus, we pose the following question.
\begin{question}
\label{q:sublinear}
    Let $r \geq 2$ be fixed, and let $G$ be a graph of maximum degree $d$ and chromatic number at most $r$. It is true that $\chi_P(G) = O \left (\frac{d}{\log d} \right )$?
\end{question}
One difficulty in answering Question \ref{q:sublinear} is that the current random approaches used to prove (\ref{eqn:r-neighborhood}), namely the Lov\'asz Local Lemma and entropy compression, do not translate easily to two-player games. These tools rely on the condition that an event at a vertex $v \in V(G)$ does not affect vertices far from $v$. However, this condition does not hold in two-player games, where a single game event may have a significant impact on a player's entire strategy. Pegden \cite{Pegden} discusses such limitations of the Lov\'asz Local Lemma in two-player games.

Next, it is unclear whether our upper bound on the Alon-Tarsi number of an $r$-chromatic graph in Theorem \ref{thm:AT} is tight. This leads us to our next question.
\begin{question}
\label{q:AT}
    What is the maximum Alon-Tarsi number of a graph of maximum degree $d$ and chromatic number at most $r$?
\end{question}
When $r=2$, the answer to Question \ref{q:AT} is $\lceil d/2 \rceil + 1$, as every orientation of a bipartite graph is an Alon-Tarsi orientation, and the maximum out-degree of an orientation is minimized by an almost Eulerian orientation. Hence, we see that Theorem \ref{thm:AT} is not tight for $r=2$.
For $r \geq 3$, the tightness of Theorem \ref{thm:AT} is open.

\section{Acknowledgment}
We are grateful to Tao Wang for carefully reading an earlier draft of this paper and pointing out an error in our terminology, which has helped us greatly improve the writing quality. We are also grateful for the helpful comments of the anonymous referees, whose feedback also helped us improve this paper.

\raggedright
\bibliographystyle{plain}
\bibliography{bib}

\end{document}